\newcommand{\Ss}{\textsection}
\newcommand\Zz{\mathbb Z}
\newcommand\Cc{\mathbb C}
\newcommand\Pp{\mathbb P}
\newcommand{\CC}{\mathcal{C}}
\newcommand{\Cd}{\mathcal{C}_d}
\DeclareMathOperator{\rk}{\text{rk}}
\DeclareMathOperator{\dive}{\text{div}}
\DeclareMathOperator{\iso}{\cong}
\DeclareMathOperator{\Db}{\text{D}^b}
\DeclareMathOperator{\NS}{\text{NS}}
\DeclareMathOperator{\Pic}{\text{Pic}}
\DeclareMathOperator{\Stab}{\text{Stab}}
\newcommand{\prim}{\text{prim}}
\newcommand{\pp}{\cdot}
\newtheorem{mainthm}{Theorem}
\newtheorem{theorem}{Theorem}
\numberwithin{theorem}{section}
\newtheorem{corollary}[theorem]{Corollary}
\newtheorem{proposition}[theorem]{Proposition}
\newtheorem{example}[theorem]{Example}
\newtheorem{question}[theorem]{Question}
\theoremstyle{remark}
\newtheorem{remark}[theorem]{Remark}
\title[On HK manifolds of K3$^{[n]}$-type with large Picard number]{On Hyperk\"ahler manifolds of K3$^{[n]}$-type with large Picard number}
\author[Y. Prieto--Monta\~{n}ez]{Yulieth Prieto--Monta\~{n}ez} %
\address{Pontificia Universidad Católica de Chile, Campus San Joaquín, Avenida Vicuña Mackenna 4860, Santiago de Chile, Chile} %
\email{yulieth.prieto@uc.cl}
\date{}
\begin{document}
\maketitle

\vspace{-2em}

\begin{abstract}
Inspired by well-known examples of hyperkähler manifolds, we show that any hyperkähler manifold $X$ of K3$^{[n]}$-type with Picard number $\rho(X) \geq 4$ is always isomorphic to a moduli space of twisted stable sheaves on a K3 surface. Additionally, we provide explicit descriptions of hyperkähler manifolds of K3$^{[n]}$-type with Picard ranks below this crucial value (e.g., $\rho(X)=3$) that are not birational to such moduli spaces.
\end{abstract}


\section{Introduction}

Kähler manifolds whose first Chern class is trivial are distinguished varieties known for their rich and intricate geometry. Beauville-Bogomolov's Theorem in \cite{Beau83} reveals a remarkable decomposition of such manifolds (after a finite étale cover), involving complex tori, strict Calabi--Yau manifolds, and hyperkähler manifolds (HK). Furthermore, in \cite{Beau83}, Beauville also introduces two major families of compact hyperkähler manifolds: the Hilbert scheme of \( n \) points on K3 surfaces and the generalised Kummer variety. In \cite{BeaDon85}, Beauville and Donagi introduced an example arising from parametrizing lines in cubic fourfolds with the same type of deformation as the Hilbert scheme of two points on a K3 surface. Under specific conditions on the cubic fourfold, they also proved that its variety of lines not only deforms but can be isomorphic to the Hilbert scheme of two points on a K3 surface of degree 14. The notion of deformation type became crucial in understanding hyperkähler manifolds, especially for determining uniqueness among known examples or the possibility of discovering new hyperkähler manifolds. We will restrict our attention to projective hyperkähler manifolds of K3$^{[n]}$-type which are those deforming to the Hilbert scheme of \(n\) points on a K3 surface.\\

Seminal works due to Mukai on moduli spaces of bundles on K3 surfaces introduce additional examples of Hyperkähler manifolds of K3\(^{[n]}\)-type. Restricting the choice of sheaves on the K3 surface is important for ensuring good stability conditions and a well-behaved moduli space \cite{Gie77}. In fact, these restrictions translate into numerical conditions (e.g., specific Chern classes) on the sheaves to guarantee properties such as smoothness and the natural symplectic structure of the moduli space, as demonstrated in \cite{Muk84, Muk87}. These generalizations expanded the list of Hyperkähler manifolds in dimension at least 2 and established the Hilbert scheme of $n$ points on a K3 surface as the special case of the moduli space of stable rank one sheaves on a K3 surface with the second Chern number \( n \). Thanks to the contributions of Huybrechts and Göttsche, O'Grady, and Yoshioka \cite{GotHuy96, Ogr97, Yos01}, explicit constructions of moduli spaces of stable sheaves on K3 surfaces have expanded Mukai’s examples and introduced additional hyperkähler manifolds, particularly those deforming to the Hilbert scheme of \( n \) points on K3 surfaces. Similarly, Yoshioka in \cite{Yos06} proved that some moduli spaces of twisted stable sheaves on K3 surfaces are also examples of compact Hyperkähler manifolds of K3$^{[n]}$-type. Without a doubt, moduli spaces are not the only examples of this type of variety (see, for instance, \cite{LehLehSorVan17}); however, they abound in this particular family. One way to understand this phenomenon is by studying the period map and, therefore, the cohomology of this family as the most faithful topological invariant. Classical results in lattice theory provide useful characterizations for studying the specific case of the moduli space of sheaves on K3 surfaces in the family of hyperkähler manifolds of K3$^{[n]}$-type when the Picard lattice becomes large.

Let $X$ be a hyperk\"ahler manifold of K3$^{[n]}$-type. The second cohomology group $H^2(X,\Zz)$, endowed with the Beauville--Bogomolov--Fujiki form, is a lattice of discriminant $-2n+2$, signature $(3,20)$ and so, $$H^2(X,\Zz) \cong U^3\oplus E_8(-1)^2\oplus \langle -2n+2 \rangle.$$ Consider the even unimodular lattice $\widetilde\Lambda$ of signature $(4,20)$, isomorphic to $U^4\oplus E_8(-1)^2$, commonly referred to as the (abstract) Mukai lattice. This is because the Mukai lattice $\widetilde{H}(S,\Zz)$ of any K3 surface $S$ is isomorphic to $\widetilde{\Lambda}$. In \cite[Corollary 9.5]{Mar11} (cf. \cite[\S1]{BayHassTsc15} or \cite[Theorem 3]{Add16}), Markman showed a natural primitive embedding of lattices preserving the Hodge structures:
\begin{equation}\label{eq: Markman emb}
    H^2(X,\Zz) \subset \widetilde \Lambda.
\end{equation} 

Assuming the embedding in \eqref{eq: Markman emb}, the orthogonal complement of $H^2(X,\Zz)$ in $\widetilde\Lambda$ is generated by a primitive vector $v_X \in \widetilde\Lambda$ with $v_X^2=2n-2 \geq 0$. Specifically, if \(X=M_H(S,v)\), a moduli space of $H$-stable sheaves on a K3 surface \(S\) with a primitive Mukai vector \(v \in \widetilde{H}(S, \mathbb{Z})\) and $H$ is an ample class $v$-generic, we know that the extension \(H^2(X, \mathbb{Z}) \subset \widetilde{\Lambda}\) is defined as follows: if \(v^2 = 0\), then \(v^\perp / v \subset \widetilde{H}(S, \mathbb{Z})\); if \(v^2 > 0\), then \(v^\perp \subset \widetilde{H}(S, \mathbb{Z})\) \cite{Muk84, Ogr97}.

Denote by $\widetilde \Lambda^{1,1}$ the algebraic part of $\widetilde\Lambda$ consisting of integral classes of type $(1,1)$ and containing $H^{1,1}(X,\Zz)$ with respect to the embedding in (\ref{eq: Markman emb}). In other words, since $\NS(X)=H^2(X,\Zz)\cap H^{1,1}(X)$ for any hyperk\"ahler manifold, we obtain $\NS(X) \subset \widetilde \Lambda^{1,1}$. Additionally, the class $v_X$ is of type $(1,1)$ in $\widetilde\Lambda$. In \cite[Proposition 4]{Add16}, Addington identified hyperk\"ahler manifolds of K3$^{[n]}$-type that are birational to moduli spaces of stable sheaves on K3 surfaces by a primitive embedding of a hyperbolic plane $U$ in $\widetilde\Lambda^{1,1}$. Later, in \cite[Lemma 2.6]{Huy17}, Huybrechts generalized this criterion for moduli space of twisted stable sheaves on K3 surfaces by requiring an embedding of a (not necessary primitive) twisted hyperbolic plane $U(k) \subset \widetilde \Lambda^{1,1}$ for some $k \in \Zz$. Applications of such criteria are useful in studying family of hyperkähler manifolds admitting birational maps of finite order. For instance, if the action of such maps preserves the symplectic 2-form, then by some results in lattice theory, there exists an embedding $U(k)\subset \widetilde{\Lambda}^{1,1}$ for some $k \in \Zz$, and so, hyperk\"ahler manifolds admitting a non-trivial symplectic birational map are always birational to some moduli space of twisted stable sheaves on a K3 surface, \cite{Pri22} cf. \cite[Main Theorem A]{DutDomPri24}.\\

It has been established that the birational geometry of a moduli space \( M_H(S,v) \), can be analyzed using wall-crossing techniques based on Bridgeland stability conditions. Denote by $\Stab(S)$ the space of stability conditions on the derived category of $S$. Assume that $v$ is a primitive vector and $v^2\geq 0$. For any generic stability condition \( \sigma \), the moduli space of \( \sigma \)-stable objects of class \( v \), denoted by \( M_{\sigma}(S,v) \), is a smooth projective hyperkähler manifold deformation-equivalent to the Hilbert scheme of points on a K3 surface (see \cite[Theorem 2.15]{BayMac14}). Moreover,  there exists a choice of a chamber \( \mathcal{C} \subset \text{Stab}(S) \) where the Gieseker stability condition in \( M_H(S,v) \) is recovered, that is, \( M_\mathcal{\sigma}(v) \iso M_H(S,v) \) for all $\sigma \in \mathcal{C}$.

This approach leads to the conclusion:

\begin{mainthm}\label{main: Theorem A}
    Let $X$ be a projective hyperk\"ahler manifold of K3$^{[n]}$-type, $n\geq 2$, of Picard number at least four. Then, $X$ is isomorphic to some moduli space of twisted stable sheaves on a K3 surface.
\end{mainthm}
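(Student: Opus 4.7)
The plan is to combine a lattice-theoretic construction of a twisted hyperbolic plane inside $\widetilde\Lambda^{1,1}$, Huybrechts' criterion (Lemma 2.6 of \cite{Huy17}), and Bridgeland wall-crossing to promote the resulting birational equivalence to an isomorphism.

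First I would analyse the algebraic lattice $\widetilde\Lambda^{1,1}$. Since $v_X$ is of type $(1,1)$ with positive square $2n-2 \geq 2$ and orthogonal to $\NS(X)$, which itself has signature $(1,\rho(X)-1)$, the lattice $\widetilde\Lambda^{1,1}$ has signature $(2,\rho(X)-1)$ and rank $\rho(X)+1 \geq 5$. By Meyer's theorem, any indefinite rational quadratic form in at least five variables represents zero non-trivially, so $\widetilde\Lambda^{1,1}$ contains a primitive isotropic vector $u$. Pick any $x \in \widetilde\Lambda^{1,1}$ with $\langle u, x\rangle \neq 0$; such an $x$ exists because the Beauville--Bogomolov--Fujiki pairing is non-degenerate and $u \neq 0$. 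Setting
$$w_\Qq := x - \frac{x^2}{2\langle u, x\rangle}\, u \in \widetilde\Lambda^{1,1}_\Qq$$
gives a rational isotropic vector with $\langle u, w_\Qq\rangle = \langle u, x\rangle \neq 0$. Clearing denominators produces an integer $w \in \widetilde\Lambda^{1,1}$ with $w^2 = 0$ and $k := \langle u, w\rangle \neq 0$, hence a (not necessarily primitive) embedding $\langle u, w\rangle_\Zz \cong U(k) \hookrightarrow \widetilde\Lambda^{1,1}$.

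By Huybrechts' criterion \cite[Lemma 2.6]{Huy17}, this embedding already implies that $X$ is birational to a moduli space $M_H(S,v,\alpha)$ of $H$-stable $\alpha$-twisted sheaves on some twisted K3 surface $(S,\alpha)$, for a primitive Mukai vector $v$ and a $v$-generic polarization $H$. To upgrade this birational equivalence to an isomorphism, I would invoke the wall-crossing results of \cite{BayMac14} and their twisted analogues: any smooth projective hyperk\"ahler manifold birational to $M_H(S,v,\alpha)$ is isomorphic to a Bridgeland moduli space $M_\sigma(v,\alpha)$ for some generic $\sigma \in \Stab(S,\alpha)$, and via the Fourier--Mukai equivalence attached to the chosen chamber this is in turn isomorphic to a Gieseker moduli $M_{H'}(S',v',\alpha')$ on a (possibly different) derived-equivalent twisted K3 surface. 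Composing these identifications yields the desired $X \cong M_{H'}(S',v',\alpha')$.

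The main obstacle lies in the lattice step: the hypothesis $\rho(X) \geq 4$ is used precisely to force $\widetilde\Lambda^{1,1}$ to have rank at least $5$, which is the sharp threshold at which Meyer's theorem supplies a rational isotropic vector. For $\rho(X)=3$ the rank drops to $4$, and indefinite quaternary forms need not represent zero over $\Qq$; this is exactly why the borderline counterexamples announced in the abstract can exist. By contrast, the wall-crossing upgrade in the last paragraph is technically delicate but essentially off-the-shelf once the criterion of \cite{Huy17} has been applied.
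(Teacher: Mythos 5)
Your proposal is correct and follows essentially the same route as the paper: an isotropic class in $\widetilde\Lambda^{1,1}$ from Meyer/Hasse--Minkowski (using $\rho(X)\geq 4$ to get rank $\geq 5$), a twisted hyperbolic plane $U(k)$ feeding into Huybrechts' criterion for birationality to a twisted moduli space, and then the Bayer--Macr\`{\i} wall-crossing plus a Fourier--Mukai equivalence to a derived-equivalent twisted K3 surface to upgrade the birational map to an isomorphism. The only difference is cosmetic: you build the isotropic partner by a rational completion and clearing denominators, while the paper uses the divisibility of $D$ to exhibit $U(k^2)$ explicitly.
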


\begin{proof}
    Due to the widely recognized Hasse-Minkowski theorem, any indefinite lattice $L$ with a rank of at least five represents zero \cite[IV. \S 3.2 Corollary 2]{Ser73}. In particular, assuming $\rho(X)$ is at least four and considering the class $v_X$ of $(1,1)$-type given by the embedding in \eqref{eq: Markman emb}, these conditions guarantee the existence of a non-trivial element $D \in \widetilde{\Lambda}^{1,1}$, which is an isotropic class (i.e., $D^2 = 0$). Let $k = \dive(D)$ denote the divisibility of $D$. Now, choose a class $F \in \widetilde\Lambda^{1,1}$ such that $\langle F \cdot D\rangle=k$ and $F^2=2m$ for some $m$. Then, it is straightforward to show $\langle D, -mD +kF\rangle$ generates $U(k^2) \subset \widetilde\Lambda^{1,1}$. As we mentioned above, this is enough to conclude that $X$ is birational to some moduli space $M_H(S, \alpha, v)$ of $\alpha$-twisted stable sheaves on some K3 surface $S$ with Mukai vector $v$ due to \cite{Add16, MonWan15, Huy17}. A birational map between two hyperkähler manifolds may or may not extend to an isomorphism. If it does, then $X \cong M_H(S, \alpha, v)$.
    
    If this is not the case, then $X \cong M_{H'}(S', \beta, w)$, a moduli space of twisted $H'$-stable sheaves on a K3 surface $S'$, constructed as follows: by \cite{Bri08} (later generalized to the twisted case in \cite{HuyMacSte08}), any $M_H(S, \alpha, v)$ can be recovered as a moduli space $M_\sigma(S, v)$ of $\sigma$-stable objects for some $\sigma \in \Stab^+(S)$, where $\Stab^+(S)$ denotes a connected component of $\Stab(S)$. According to \cite[Theorem 1.1, 1.2]{BayMac14}, any birational model of $M_\sigma(S, v)$ (and so apply to $X$) appears as a moduli space $M_{\sigma'}(S, v)$, where $\sigma' \in \Stab^+(S)$. Without loss of generality, we say $X \cong M_\sigma(S, v)$ for some $\sigma \in \Stab^+(S)$ a $v$-generic stability condition.
   
  Consider an isotropic Mukai vector $v_0 \in \widetilde{H}(S, \mathbb{Z})$. Since $\sigma \in \Stab^+(S)$ is a generic stability condition with respect to the Mukai vector $v$, by \cite[Lemma 7.2]{BayMac14-projectivity}, there exists a class $\beta \in \operatorname{Br}(M_\sigma(S, v_0))$ and a derived equivalence:
   $$\phi: \Db(S) \rightarrow \Db(M_\sigma(S, v_0), \beta).$$
   In particular, $S' := M_\sigma(S, v_0)$ is a smooth projective K3 surface with a Brauer class $\beta$. Again, by \cite{BayMac14-projectivity}, the derived equivalence $\phi$ induces an isomorphism $\widetilde{\phi}: M_{\sigma}(S, v) \cong M_{H'}(S', \beta, \phi(v))$ where $H'$ is an ample class on $S'$. The stability condition in the latter moduli space is now Gieseker but with respect to a K3 surface $S'$ derived equivalent to $S$. Therefore, $X$ is isomorphic to the moduli space of twisted $H'$-stable sheaves on a K3 surface $S'$ with Mukai vector $w:=\phi(v)$ and $H'$ is an ample class $w$-generic.
    
\end{proof}

Even though the existence proof appears quite simple, it is not trivial to identify the K3 surface and, moreover, the numerical conditions of the sheaves encoded in the Mukai vector. As a consequence of a result by Morrison \cite[Corollary 2.10]{Mor84}, if \( S \) denotes the K3 surface where \( X \) is isomorphic to \( M_H(S,\alpha, v) \), and assuming that \( \rho(X) \geq 13 \), we obtain $\NS(S)$ is uniquely determined by its rank $\rho(S) \geq 12$ and its discriminant form. In this particular case, the class $\alpha$ is the trivial ones, and so, $X$ is isomorphic to some moduli space of (non-twisted)-stable sheaves. Moreover, there exists an embedding \( U \subset \NS(S) \), so, $S$ also admits an elliptic fibration (with section).\\

On the other hand, there exist moduli spaces of (twisted) sheaves on K3 surfaces of K3$^{[n]}$-type with a Picard number less than 4. Based on these considerations, the following question naturally arises:

\begin{question}\label{main question}
  Is 4 the smallest integer such that any projective hyperkähler manifold of K3$^{[n]}$-type with Picard number \(\rho(X) \geq 4\) is always isomorphic to some moduli space of twisted stable sheaves on a K3 surface?
\end{question}

There exist general hyperk\"ahler manifolds of K3$^{[2]}$-type that are not isomorphic to moduli spaces of sheaves on K3 surfaces. Notable examples have been extensively discussed in the works by Hassett \cite{Has00} considering the variety of lines on cubic fourfolds. Additional examples of intricate geometry are also explored in \cite{Has00}, where the geometry of a cubic fourfold \(Y\) is explicitly described via certain divisors \(\mathcal{C}_d\) in the moduli space of cubic fourfolds. This geometric structure is mirrored in the algebraic cohomology of \(Y\), with the presence of algebraic surfaces in \(Y\) further shaping the geometry of the hyperkähler fourfold \(F(Y)\). This is the approach that we adopt to produce examples of hyperk\"ahler manifolds different from moduli spaces of twisted stable sheaves on K3 surfaces.

In \cite{MacSte12}, it is shown that when the cubic fourfold \(Y\) contains a plane (i.e., $Y \in \CC_8$), the Fano variety \(F(Y)\) is not necessarily isomorphic to a moduli space of \(H\)-stable sheaves on a K3 surface. Instead, it parameterizes twisted stable complexes of sheaves on a K3 surface. However, assuming \(Y\)  has Picard number two, an isomorphism to a non-twisted version can be established. Lately, in \cite{Huy17}, Huybrechts extended this result to more values of \(d\). The minimum value of \(d\) for which \(F(Y)\) is not birational to a moduli space of twisted \(H\)-stable sheaves on a K3 surface is $d=12$, indicating that the cubic fourfold \(Y\) contains a cubic scroll. The next example occurs when \(d=20\), which means that \(Y\) contains a Veronese surface. For the special generic case (meaning that $\rho(F(Y))=2$), it is sufficient to consider $Y \in \CC_d$, where $d$ is an integer such that $d>6$, $d \equiv 0,2 \pmod 6$ and the prime factorization of $d/2=\Pi p_i^{n_i}$ satisfies the following condition: 
\begin{equation}\label{eq: not be a moduli twisted}
    \exists p_i \equiv 2 \pmod 3 \text{ such that } n_i \text{ is odd.}\tag{$\dagger$}
\end{equation}
   
However, for $\rho(F(Y))\geq 3$, the intersection of two Hassett divisors $\CC_{d_1}$ and $\CC_{d_2}$, can be nesting and induce labels $K_{d}$ where the condition \eqref{eq: not be a moduli twisted} fails even if $d_1$ and $d_2$ satisfies such condition. In fact, we have the following theorem:

\begin{mainthm}\label{main: Theorem example rk 3 bir to moduli}
    Let $Y$ be a cubic fourfold with a rational normal cubic scroll $\Sigma_3$ and a Veronese surface $V$ in $Y$. Then, the variety of lines $F(Y)$ is isomorphic to a moduli space of twisted stable sheaves on some K3 surface.\\
\end{mainthm}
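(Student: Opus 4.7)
The plan is to reduce to the Addington--Huybrechts criterion used in the proof of Theorem~\ref{main: Theorem A}. Since $Y$ contains a rational normal cubic scroll $\Sigma_3$ and a Veronese surface $V$, we have $Y \in \CC_{12} \cap \CC_{20}$, and the classes of $\Sigma_3$ and $V$ produce two linearly independent algebraic classes in $H^{2,2}(Y,\Zz)_{\prim}$ in addition to $h^2$. Via the Abel--Jacobi/Plücker correspondence between $H^4(Y,\Zz)_{\prim}$ and $H^2(F(Y),\Zz)_{\prim}$, this yields three independent algebraic classes in $H^2(F(Y),\Zz)$, so $\rho(F(Y)) \geq 3$, and we focus on the generic case $\rho(F(Y))=3$ (for $\rho \geq 4$, Theorem~\ref{main: Theorem A} applies directly).

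First I would record the intersection data. The $\CC_{12}$-label contributes the standard Hassett rank-2 lattice of discriminant $12$ containing $h^2$ and the class of $\Sigma_3$, and the $\CC_{20}$-label contributes a corresponding rank-2 lattice of discriminant $20$ containing $h^2$ and $[V]$; the mutual intersection $\Sigma_3 \pp V$ in $H^{2,2}(Y,\Zz)$ is computed from the geometry of the two surfaces in $Y$ (e.g.\ via the classical formula $\Sigma_3 \pp V = \deg(\Sigma_3)\deg(V) - \chi_{\text{top}}(\Sigma_3 \cap V)$ adjusted by the excess intersection). Transporting to $F(Y)$ and appending the Markman vector $v_{F(Y)}$ with $v_{F(Y)}^2 = 2$ and orthogonal to $H^2(F(Y),\Zz)$, I obtain an explicit Gram matrix for a rank-$4$ sublattice $L \subset \widetilde\Lambda^{1,1}$ of signature $(1,3)$.

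Next, I would exhibit an isotropic class $D \in L$. Since $L$ is indefinite of rank $4$ and is not definite in either factor, this amounts to a local--global check: by Meyer's theorem, if $L$ represents $0$ over $\Qq_p$ for every $p$ (including $\infty$), then it represents $0$ over $\Qq$, and hence primitively over $\Zz$ after rescaling. The signature handles the real place, and the relevant $p$-adic obstructions only need to be checked at primes dividing the discriminant of $L$ (here primes dividing $2$, $3$, $5$). I expect these local conditions to be satisfied because of the particular shape of the Gram matrix produced by the geometric classes $[\Sigma_3]$ and $[V]$. With the isotropic class $D$ in hand, I then choose $F \in \widetilde\Lambda^{1,1}$ with $\langle D \pp F\rangle = k := \dive(D)$, and as in the proof of Theorem~\ref{main: Theorem A}, the pair $\langle D, -mD + kF\rangle$ spans a copy of $U(k^2) \subset \widetilde\Lambda^{1,1}$.

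Finally, I invoke the Addington--Huybrechts criterion \cite{Add16, Huy17} to conclude that $F(Y)$ is birational to a moduli space of twisted stable sheaves on a K3 surface, and then I upgrade ``birational'' to ``isomorphic'' by the Bridgeland/Bayer--Macrì argument in the second half of the proof of Theorem~\ref{main: Theorem A}: a birational model in the positive component $\Stab^+(S)$ is realized by some $\sigma$-stable moduli space $M_\sigma(S,v)$, and Fourier--Mukai transforming along an isotropic $v_0$ produces a twisted K3 surface $(S',\beta)$ and a Gieseker-type moduli space $M_{H'}(S',\beta,\phi(v))$ isomorphic to $F(Y)$. The main obstacle I anticipate is the isotropy step: computing the Gram matrix $L$ from the geometry of $\Sigma_3$ and $V$ and verifying the local conditions guaranteeing an isotropic vector, which requires a careful intersection-theoretic bookkeeping rather than a purely formal appeal to Hasse--Minkowski (rank $4$ is the borderline case).
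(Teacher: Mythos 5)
Your overall framework (produce an isotropic class in $\widetilde\Lambda^{1,1}$, build a twisted hyperbolic plane $U(k^2)$, apply Addington--Huybrechts, then upgrade birational to isomorphic via the Bayer--Macr\`i/Fourier--Mukai argument of Theorem~\ref{main: Theorem A}) is the right one, and your identification $Y \in \CC_{12}\cap\CC_{20}$ is correct. But the proposal has a genuine gap exactly at the step you yourself flag: you never verify that the rank-$4$ lattice $\widetilde\Lambda^{1,1}$ actually contains an isotropic vector, writing only that you ``expect'' the local conditions to hold ``because of the particular shape of the Gram matrix.'' At rank $4$ this is not a formality -- it is the entire content of the theorem. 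Indeed, Theorem~\ref{main: theorem rk 3 non bir to moduli} of the same paper exhibits a cubic fourfold with $\rk A(Y)=3$ (Veronese surface, $Y\in\CC_{60}$) for which the analogous expectation fails and $F(Y)$ is \emph{not} birational to any twisted moduli space; so an unverified local--global check cannot be waved through. Two smaller inaccuracies compound this: the lattice $\NS(F(Y))\oplus\Zz v_{F(Y)}$ has signature $(2,2)$, not $(1,3)$ (since $\NS$ is hyperbolic of signature $(1,2)$ and $v_{F(Y)}^2=2>0$), and the rank-$\geq 5$ statement is Meyer's theorem while what you need at rank $4$ is the Hasse--Minkowski local--global principle together with an explicit computation of the local invariants -- a computation you do not carry out, and for which you also leave the key intersection number $\Sigma_3\cdot V$ undetermined (your proposed ``degree minus Euler characteristic'' formula is not a substitute for the actual value).

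The paper's proof avoids the Mukai-lattice computation altogether and works directly with Hassett's Gram matrix for $A(Y)=\langle h^2,\Sigma_3,V\rangle$ (with $\Sigma_3\cdot V=4$): for $\alpha=xh^2+y\Sigma_3+zV$ the label $\langle h^2,\alpha\rangle$ has discriminant $d=4(3y^2+5z^2)$, and $y=z=1$ gives $d=32$, whose half $16=2^4$ has only even exponents at primes $\equiv 2 \pmod 3$; hence $d=32$ satisfies condition \eqref{cond be bir to moduli twisted} and Theorem~\ref{Theorem: moduli of cubic fourfolds} gives birationality to a twisted moduli space, after which the isomorphism is obtained exactly as in the second half of the proof of Theorem~\ref{main: Theorem A} (this last reduction you do have correctly). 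Note the instructive point your sketch misses: both individual labels $12$ and $20$ satisfy \eqref{eq: not be a moduli twisted}, so the conclusion hinges on finding the \emph{new} label $32$ produced by the rank-$3$ lattice -- i.e.\ on the explicit arithmetic you deferred. To repair your proof you would either have to carry out the local solubility analysis of the quaternary form honestly, or, more simply, do what the paper does and exhibit the label $d=32$ directly.
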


A crucial point in ensuring that \( 4 \) is the “optimal” bound involves showing that there exists a hyperk\"ahler variety with Picard number \( 3 \) that cannot be birational to a moduli space of twisted stable sheaves on a K3 surface but still is deformation equivalent to the Hilbert scheme of finite length on a K3 surface. To achieve this, we study the non-emptiness of the intersection of Hassett divisors in order to get a desired \( F(Y) \). We start with the case when \(\rk(F(Y))=3\), as analyzed by Yang--Yu in \cite{YanYu20}. We prove:

\begin{mainthm}\label{main: theorem rk 3 non bir to moduli}
    Let $Y$ be a cubic fourfold admitting a Veronese surface. Suppose $Y \in \CC_{60}$ and the rank of the algebraic cohomology of $Y$ is three. Then, the variety of lines $F(Y)$ is a hyperk\"ahler fourfold with $\rho(F(Y))=3$ deforming to the Hilbert scheme of two points on a K3 surface but non-birational to any moduli space of twisted stable sheaves on a K3 surface.
\end{mainthm}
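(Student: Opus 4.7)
The plan is to establish the theorem in three steps: existence of such a $Y$, explicit computation of the rank-four lattice $\widetilde\Lambda^{1,1}$, and an anisotropy check for its quadratic form.

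For Step 1 (existence), I would follow the Hassett-type analysis of Noether--Lefschetz loci of cubic fourfolds developed in \cite{YanYu20}. Fix an abstract rank-$3$ positive-definite even lattice $L$ primitively containing $h^2$, the Veronese label $K_{20}$ (with Gram matrix $\bigl(\begin{smallmatrix}3 & 4\\ 4 & 12\end{smallmatrix}\bigr)$), and some rank-$2$ sublattice of discriminant $60$; by surjectivity of the period map for cubic fourfolds, each such abstract $L$ is realised as $A(Y)$ for a very general member of a codimension-$2$ family in the moduli space of cubic fourfolds, and in particular one for which $\rho(F(Y))=\rk A(Y)_{\prim}+1=3$.

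For Step 2, I would make $\widetilde\Lambda^{1,1}$ explicit. The Abel--Jacobi map \cite{BeaDon85} gives an isometry $A(Y)_{\prim}(-1) \cong \NS(F(Y))_{\prim}$ with respect to the Beauville--Bogomolov form, yielding the N\'eron--Severi lattice $\NS(F(Y))$ of signature $(1,2)$ and rank $3$. The Markman embedding \eqref{eq: Markman emb} with $v_X^2 = 2$ then presents $\widetilde\Lambda^{1,1}$ as the (index $\leq 2$) saturation of $\Zz v_X \oplus \NS(F(Y))$ inside $\widetilde\Lambda$, so the gluing bookkeeping yields a concrete Gram matrix for a rank-$4$, signature-$(2,2)$ lattice. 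Exactly as in the proof of Theorem~\ref{main: Theorem A}, $\widetilde\Lambda^{1,1}$ contains a non-zero isotropic vector if and only if it contains some $U(k)$; by \cite[Lemma 2.6]{Huy17} the theorem then reduces to showing that the quadratic form on $\widetilde\Lambda^{1,1}$ is \emph{anisotropic}.

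The main obstacle is Step 3, the anisotropy check, since for rank-$4$ indefinite forms isotropy is not automatic (Meyer's theorem only kicks in in rank $\geq 5$). I would verify anisotropy by a local computation of Hasse--Witt invariants at each prime dividing the discriminant of $\widetilde\Lambda^{1,1}$, with particular attention to $p = 5$: note that $60/2 = 2 \cdot 3 \cdot 5$ has the prime $5 \equiv 2 \pmod 3$ appearing with odd exponent, which is precisely the numerical condition \eqref{eq: not be a moduli twisted} forcing anisotropy at the rank-$2$ level. One must then ensure, as a consistency check, that no hidden isotropic vector arises from integer combinations of all three generators of $A(Y)$ or from the glue vector producing $\widetilde\Lambda^{1,1}$ inside $\widetilde\Lambda$ -- this is precisely the extra flexibility that Theorem~\ref{main: Theorem example rk 3 bir to moduli} exploits in the $\CC_{12} \cap \CC_{20}$ case to produce a birational moduli description, so the delicate point is to verify that no such nested label appears in $\CC_{20} \cap \CC_{60}$. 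Once the Gram matrix from Step 2 is pinned down, this becomes a finite but genuinely case-by-case arithmetic verification.
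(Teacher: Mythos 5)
Your reduction is legitimate and genuinely different in organization from the paper's argument: via the Markman embedding and \cite[Lemma 2.6]{Huy17}, non-birationality to a twisted moduli space is indeed equivalent to the rank-$4$ lattice $\widetilde\Lambda^{1,1}$ being anisotropic (your isotropic-vector $\Leftrightarrow$ $U(k)$ equivalence is the same one used in the proof of Theorem~\ref{main: Theorem A}). The paper, however, never passes to the quaternary lattice: it applies Proposition~\ref{Prop: intersection two hassett in rk 3} with $(d_1,d_2)=(20,60)$, for which $\lambda_{20,60}=0$, so \emph{every} label of $Y$ has discriminant $d=20y^2+60z^2$, hence $d/2=2\cdot 5\,(y^2+3z^2)$; since the binary form $y^2+3z^2$ only represents integers in which primes $\equiv 2 \pmod 3$ (in particular $5$) occur to even exponents, $5$ divides $d/2$ to an odd power for every label, so every label satisfies \eqref{eq: not be a moduli twisted} and the twisted criterion of Theorem~\ref{Theorem: moduli of cubic fourfolds} (in its ``only if'' direction, due to \cite{Huy17}) concludes. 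Your local computation, once completed, would be encoding exactly this $5$-adic fact, but at the cost of first pinning down the quaternary Gram matrix.

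There are two genuine gaps as written. First, the crux --- your Step 3 --- is never carried out: neither the Gram matrix of $\widetilde\Lambda^{1,1}$ (the gluing of $\Zz v_X\oplus\NS(F(Y))$ inside $\widetilde\Lambda$ requires the Markman/Addington identification of the algebraic Mukai lattice of a cubic fourfold, not just ``bookkeeping'') nor the Hasse-invariant check at $p=5$ is performed, and this check is not a formality: here $|A(Y)|=d_1d_2/3=400$ (Proposition~\ref{theorem: inter of Hassett div}), so $\widetilde\Lambda^{1,1}$ has square discriminant, its $5$-adic discriminant is trivial, and anisotropy over $\Qq_5$ is decided entirely by the Hasse invariant; that computation is precisely where the content of the theorem sits and cannot be left as ``a finite case-by-case verification'' --- note also that an honest $\Qq_5$-anisotropy proof automatically absorbs your separate worry about ``hidden'' isotropic vectors and nested labels, so that should not appear as an extra step. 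Second, your existence argument invokes ``surjectivity of the period map for cubic fourfolds,'' which is false as stated: the period map is an open embedding whose image omits the loci of discriminant $2$ and $6$, and realizing a rank-$3$ lattice with the prescribed labels as $A(Y)$ is the content of \cite[Theorem 7]{YanYu20} (Proposition~\ref{theorem: inter of Hassett div}); since the theorem is conditional on such a $Y$, this is a citational slip rather than a fatal flaw, but as written it is not a correct justification.
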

A consequence of Theorems  \ref{main: Theorem A} and \ref{main: theorem rk 3 non bir to moduli} is the following fact:

\begin{corollary}\label{main: non bir to moduli has rk<4}
    If \(X\) is a projective hyperkähler manifold of K3$^{[n]}$-type non-birational to any moduli space of twisted stable sheaves on a K3 surface, then \(1 \leq \rho(X) \leq 3\).
\end{corollary}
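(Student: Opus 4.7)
The plan is to deduce the corollary as a direct logical combination of Theorem \ref{main: Theorem A} and Theorem \ref{main: theorem rk 3 non bir to moduli}, with no new ingredients. First I would record the trivial lower bound: since $X$ is assumed projective, it admits an ample class, so $\rho(X) \geq 1$. This takes care of one side of the inequality and does not depend on the birationality hypothesis at all.

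For the upper bound, the argument I have in mind is simply the contrapositive of Theorem \ref{main: Theorem A}. That theorem asserts that every projective hyperkähler manifold of K3$^{[n]}$-type (with $n \geq 2$) of Picard number at least four is \emph{isomorphic} to some moduli space of twisted stable sheaves on a K3 surface; in particular such an $X$ is birational to such a moduli space. Therefore, if $X$ is not birational to any moduli space of twisted stable sheaves on a K3 surface, the hypothesis $\rho(X) \geq 4$ of Theorem \ref{main: Theorem A} must fail, giving $\rho(X) \leq 3$. For the edge case $n=1$, $X$ is itself a K3 surface and hence trivially a moduli space of sheaves, so this case is vacuous in the statement. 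Combining with the first paragraph yields $1 \leq \rho(X) \leq 3$.

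Finally, although not strictly required for the corollary as stated, I would add a remark observing that the upper bound $\rho(X) \leq 3$ is sharp and realized: Theorem \ref{main: theorem rk 3 non bir to moduli} exhibits the variety of lines $F(Y)$ on a cubic fourfold $Y \in \CC_{60}$ containing a Veronese surface as an explicit K3$^{[2]}$-type hyperkähler manifold with $\rho(F(Y))=3$ which fails to be birational to any moduli space of twisted stable sheaves on a K3 surface. There is no genuine obstacle to overcome — the entire content lies in Theorems \ref{main: Theorem A} and \ref{main: theorem rk 3 non bir to moduli}, and the corollary is a one-line formal consequence once the direction of implication (isomorphic implies birational) is read off correctly.
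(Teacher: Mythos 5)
Your argument is correct and is exactly the reasoning the paper intends: the upper bound is the contrapositive of Theorem \ref{main: Theorem A} (isomorphic, hence birational, to a twisted moduli space once $\rho(X)\geq 4$), the lower bound is projectivity, and Theorem \ref{main: theorem rk 3 non bir to moduli} only enters to show the bound $3$ is actually attained. No gaps; this matches the paper's (implicit) proof.
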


For the case where \(\text{rk}(F(Y)) = 4\), we begin by generalizing some results from \cite{YanYu20}. We analyze the intersection of the divisors \(\CC_{d_1}\), \(\CC_{d_2}\), and \(\CC_{d_3}\), where each \(d_i\) satisfies condition \eqref{eq: not be a moduli twisted}. Nevertheless, due to Theorem \ref{main: Theorem A}, an inductive construction like the one above cannot be extended to cases where $\rho(F(Y)) \geq 4\). We analyze the particular case of cubic fourfolds with $\rk A(Y)= 4$. The following result is a consequence of classical facts on number theory concerning the solvability of polynomial equations over finite fields:

\begin{mainthm}\label{main: theorem rk>3}
    If \(Y \in \CC_{d_1}\cap \CC_{d_2}\cap \CC_{d_3}\) is a special cubic fourfold with \(\rk A(Y) \geq 4\), then $Y \in C_d$ for some $d$ that does not satisfy condition\eqref{eq: not be a moduli twisted}. In particular, \(F(Y)\) is isomorphic to some moduli space of twisted stable sheaves on a K3 surface.
\end{mainthm}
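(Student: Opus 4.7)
The plan is to reduce the theorem to a Diophantine problem solvable by Meyer's theorem (Hasse--Minkowski in $\geq 5$ variables), with the rank hypothesis $\rk A(Y) \geq 4$ providing the necessary free variables.

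As a first step I would translate the failure of \eqref{eq: not be a moduli twisted} into a representability condition. By the classical description of Eisenstein norms, a positive integer $n$ lies in the set $S := \{x^2 + xy + y^2 : x, y \in \Zz\}$ if and only if every prime $p \equiv 2 \pmod 3$ divides $n$ to an even power. Combined with the Hassett constraints $d > 6$ and $d \equiv 0, 2 \pmod 6$, this yields: condition \eqref{eq: not be a moduli twisted} fails for $d$ precisely when $d/2 \in S$. The same valuation characterization shows that $S$ is closed under division by integer squares, a property I will use to handle saturation.

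Next, set $T := (h^2)^\perp \cap A(Y)$. By the Hodge--Riemann relations applied to the primitive middle cohomology, $T$ is positive definite of rank $\rk A(Y) - 1 \geq 3$. For any $u \in T$ the sublattice $\langle h^2, u \rangle \subset A(Y)$ has discriminant $3 u^2$, so the theorem reduces to producing a nonzero $u \in T$ with $3 u^2 \in 2 S$, i.e., a nontrivial integer zero of
\[
  F(u, x, y) \;:=\; 3\, q_T(u) \;-\; 2(x^2 + xy + y^2),
\]
a quadratic form in $\rk T + 2 \geq 5$ variables of indefinite signature $(\rk T, 2)$. By Meyer's theorem $F$ is isotropic over $\Qq$, and clearing denominators yields an integer zero $(u, x, y)$ with necessarily $u \neq 0$ (as $x^2 + xy + y^2$ is positive definite); integrality of $u^2$ forces $3 \mid x^2 + xy + y^2$, so $d := 3 u^2 = 2(x^2 + xy + y^2)$ satisfies $d \equiv 0 \pmod 6$ and $d/2 \in S$. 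Because the isotropic cone of $F$ projects to an infinite set of rational directions in $T \otimes \Qq$ while only finitely many primitive vectors of $T$ have small norm, I can choose an isotropic direction whose saturation in $A(Y)$ yields a primitive rank $2$ sublattice $K^{\mathrm{prim}} \ni h^2$ of discriminant $d' > 6$; since $d'$ divides $d$ by an integer square, the closure of $S$ under square-division gives $d'/2 \in S$, so $d'$ fails \eqref{eq: not be a moduli twisted} and $Y \in \CC_{d'}$.

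With $d'$ in hand, I would invoke the Addington--Huybrechts criterion \cite[Lemma 2.6]{Huy17} to see that $F(Y)$ is birational to a moduli space of twisted stable sheaves on a K3 surface, and then upgrade this to an isomorphism via the Bridgeland wall-crossing argument used in the proof of Theorem \ref{main: Theorem A}. The main obstacle I anticipate is the bookkeeping in the saturation step --- ensuring that $d' > 6$ holds simultaneously with $d'/2 \in S$ --- which requires leveraging both the infinitude of isotropic directions of $F$ and the closure of $S$ under division by integer squares; these are the only non-formal ingredients beyond Meyer's theorem, and the rank hypothesis $\rk A(Y) \geq 4$ is precisely what places the global equation in the $\geq 5$-variable regime where that theorem applies.
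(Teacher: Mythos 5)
Your proposal is correct, but it takes a genuinely different route from the paper. The paper's proof stays with the explicit labels: by Proposition \ref{Prop: intersection three hassett in rk 4} every label produced from the three given ones is a value of an explicit ternary quadratic form in $(y,z,w)$, and the bad primes $p\equiv 2\pmod 3$ are evened out by reducing that form modulo those primes and invoking Chevalley's theorem over $\mathbb{F}_p$, together with the square-rescaling of labels in Corollary \ref{cor: multiple scalar of Hassett divisors}; this forces the case analysis of the $\lambda_{d_id_j}$ according to $d_i\bmod 6$ and a factorization $d=d'\cdot q_2$ with $d'=\gcd(d_1,d_2,d_3)$. You instead discard the given labels, work with the positive definite lattice $T=(h^2)^\perp\cap A(Y)$ of rank $\rk A(Y)-1\geq 3$, encode the failure of \eqref{eq: not be a moduli twisted} as representability of $d/2$ by the Eisenstein norm form $x^2+xy+y^2$, and apply Meyer/Hasse--Minkowski to the indefinite form $3q_T-2(x^2+xy+y^2)$ in at least five variables, exactly parallel to the lattice argument behind Theorem \ref{main: Theorem A}. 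What your route buys is a cleaner, case-free existence proof (it does not even use the hypothesis $Y\in\CC_{d_1}\cap\CC_{d_2}\cap\CC_{d_3}$, only $\rk A(Y)\geq 4$); what the paper's route buys is explicit control of which discriminants occur. Two small points: the saturation bookkeeping you flag as the main obstacle is automatic, since once the primitive closure of $\langle h^2,u\rangle$ has discriminant $d'$, the smooth cubic $Y$ lies in $\CC_{d'}$ by definition and Theorem \ref{Theorem: moduli of cubic fourfolds} (Hassett's non-emptiness criterion, i.e.\ the exclusion of discriminants $2$ and $6$ on smooth cubics) already gives $d'>6$ and $d'\equiv 0,2\pmod 6$, so the ``infinitely many isotropic directions'' device is unnecessary, while the passage $d'=d/m^2$ with $d'/2\in S$ is fine as you state; and your final upgrade from birational to isomorphic is the same wall-crossing argument as in Theorem \ref{main: Theorem A}, which the paper also relies on (indeed $\rho(F(Y))=\rk A(Y)\geq 4$, so Theorem \ref{main: Theorem A} applies directly, the quadratic-form computation being the paper's alternative, more explicit verification).
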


The paper is structured as follows: In Section \ref{section: main defns of cubic}, we review key facts about cubic fourfolds, their varieties of lines, and the intersection of Hassett divisors. In Section \ref{section: example in rk 3 with F(Y) non birational to moduli}, we present explicit hyperkähler manifolds of rank three derived from special cubic fourfolds, as discussed in Theorem \ref{main: Theorem example rk 3 bir to moduli}. We also outline conditions under which these cubic fourfolds yield the hyperkähler fourfold described in Theorem \ref{main: theorem rk 3 non bir to moduli}. Finally, in Section \ref{section: rk >=4}, we find some applications of Theorem \ref{main: Theorem A} and an alternative proof for the notable case of the variety of lines of cubic fourfolds, thereby proving Theorem \ref{main: theorem rk>3}.

\subsection*{Acknowledgments}

It is a great pleasure to thank Justin Sawon, Emanuele Macrì, and Laura Pertusi for their impetus in starting this project, as well as for their insightful discussions and valuable suggestions on the moduli of cubic fourfolds and the intersection of Hassett divisors. I would also like to thank Kieran G. O'Grady who showed interest in the main question and encouraged me to write the manuscript during his visit to ICTP. I am grateful to Lothar Göttsche and Laura Pertusi for reading a preliminary version of this paper and for their observations. Thanks also to Daria Tieplova, Mohamed Aliouane, and Giacomo Mezzedimi for their valuable comments. I am deeply grateful to Reinder Meinsma for kindly highlighting that the proof of Theorem \ref{main: Theorem A} applies when $\rho(X)$ is at least four.

\subsection*{Notation}
 For a projective hyperkähler manifold $X$ of K3$^{[n]}$-type, $n \geq 2$ and its Picard number is denoted by $\rho(X)$, which is defined as $\rho(X) = \text{rk}\, \Pic(X) = \text{rk}\, \NS(X) = \text{rk}\, H^{1,1}(X,\mathbb{Z})$. Lattices are free abelian groups equipped with a non-degenerate symmetric bilinear form. The ADE lattices considered here are positive definite.

\section{Cubic fourfolds and their varieties of lines}\label{section: main defns of cubic}

We refer the reader to \cite{Has00,Has16} for a deeper discussion of cubic fourfolds. We review some basic notions and relevant material pertinent to our discussion.

Let $Y \subset \Pp^5$ be a smooth cubic fourfold defined by the zero locus of a homogeneous polynomial $P \in \Cc[x_0,x_1,x_2,x_3,x_4,x_5]$ of degree three. Denote by $h$ the restriction of the hyperplane class (it is also known as the Poincar\'e dual to the hyperplane class), so $h^4=\deg(Y)=3$. Hence, by Lefschetz hyperplane theorem and Poincaré duality, $H^2(Y,\Zz)=\Zz \pp h$ and $H^6(Y,\Zz)=\Zz \pp (h^3/3)$. The Picard group of $Y$ is $\Pic(Y) \iso \Zz \pp h$ and the Betti numbers of $Y$ are given by 
\[
b_i(Y) := \dim H^i(Y,\Zz)=
\begin{cases}
  1 & \text{if } i=0,2,6,8, \\
  23& \text{if } i =4, \\
  0& \text{otherwise.}
\end{cases}
\]

In particular, the middle Hodge numbers of $Y$ are
$$h^{4,0}(Y)=h^{0,4}(Y)=0, \ h^{3,1}(Y)=h^{1,3}(Y)=1, \text{ and } h^{2,2}(Y)=21.$$ The algebraic cohomology of $Y$ is denoted by $A(Y):=H^4(Y,\Zz)\cap H^{2,2}(Y)$ and it is a positive definite lattice containing $h^2$. 

It was proved by Zucker in \cite{Zuc77} that \( A(Y) \) is generated (over \( \mathbb{Q} \)) by the classes of algebraic cycles, thereby confirming the validity of the Hodge conjecture for cubic fourfolds. Lately, Voisin in \cite[Theorem 1.4]{Voi13} proved the integral version of the Hodge conjecture for cubic fourfolds.

\begin{remark}\label{rmk: very general, special, very generic and generic cubic}
A cubic fourfold $Y$ is called very general if any algebraic surface is homologous to a complete intersection. In particular, its algebraic lattice $A(Y)$ has rank $1$, and so $A(Y)$ is generated by the class $h^2$. Following the notation introduced by Hassett in \cite[\S 3]{Has00}, $Y$ is special if it contains an algebraic surface $T \subset Y$ not homologous to a complete intersection. Specifically, this condition is equivalent to requiring that the rank of $H^{2,2}(Y,\mathbb{Z})$ is \textit{at least two}, see \cite[\S 3]{Has00} and if it is exactly two, we say that $Y$ is special generic. As a consequence of the validity of the Integral Hodge conjecture for cubic fourfolds, any class $\gamma \in H^{2,2}(Y,\Zz)$ is algebraic, see \cite{Voi13}. Since \( T \) can be \( \mathbb{P}^2 \), the cubic \( Y \) is namely very generic if it admits a plane \( \mathbb{P}^2 \subset \mathbb{P}^5 \), and the rank of the algebraic part of \( H^2(Y, \mathbb{Z}) \) is exactly two.
\end{remark}

Denote by $F(Y)$ the variety of lines $\mathbb{P}^1 \simeq L \subset Y$ contained in a cubic fourfold $Y$. It is well-known that $F(Y)$ is an irreducible smooth projective fourfold with vanishing first Chern class. Beauville and Donagi, in \cite{BeaDon85}, showed that it is indeed a hyperk\"ahler manifold of dimension $4$ and is deformation-equivalent to the Hilbert scheme of two points of a K3 surface. Particularly, the Betti numbers of $F(Y)$ are given by 
\[
b_i(F(Y)) := \dim H^i(F(Y),\Zz)=
\begin{cases}
  1 & \text{if } i=0\\
  23& \text{if } i =2, \\
  276& \text{if } i =4, \\
  0& \text{otherwise.}
\end{cases}
\]

and the non-trivial Hodge numbers of $F(Y)$ are:

$$h^{2,0}=h^{0,2}=h^{4,0}=h^{0,4}=1, \ h^{1,1}=21, \ h^{3,1}=h^{1,3}=21 \text{ and } h^{2,2}=232.$$

Note that $\rho(F(Y)):=\rk(\NS(F(Y))=\rk (H^{1,1}(F(Y))\cap H^2(F(Y),\Zz))$ is an integral number between $1$ and $21$. In particular, for a very general smooth cubic fourfold $Y$, $\rho(F(Y)) = 1$. Some examples, including those studied by Beauville and Donagi, showed that under certain assumptions on $Y$ (for instance, the case of Pfaffian cubic fourfolds), the Fano variety $F(Y)$ can be isomorphic to the Hilbert scheme of $2$-points on a general K3 surface (of degree 14). This observation implies that the Picard number of $F(Y)$ can be also $2$, as the exceptional divisor introduces an additional class. \\

The cohomology structure of $H^4(Y,\Zz)$ under the intersection form $\langle , \rangle$ is a unique (up to isometries) odd unimodular lattice of signature $(21,2)$, isomorphic to $$(H^4(Y,\Zz),\langle , \rangle) \cong (+1)^{\oplus 21}\oplus (-1)^{\oplus 2}.$$ Additionally, according to \cite[Proposition 2.1.2]{Has00}, the primitive cohomology $H^4(Y,\Zz)_{\prim}$ is identified as $$H^4(Y,\Zz)_{\prim}=(h^2)^\perp \cong A_2 \oplus U^{\oplus 2} \oplus E_8^{\oplus 2}$$ due to the isomorphism $H^4(Y,\Zz)_{\prim} \overset{\sim}{\rightarrow} H^2(F(Y),\Zz)_{\prim}(-1)$ induced by the Abel-Jacobi map $\alpha: H^4(Y,\Zz)\rightarrow H^2(F(Y),\Zz)(-1)$; refer to \cite[Proposition 6]{BeaDon85} for details. \\

\subsection{Hassett divisors and the moduli of cubic fourfolds}
We review some key aspects of the moduli space of special cubic fourfolds in \cite[\Ss 3]{Has00}. We focus on numerical criteria that determine when the variety of lines is birational to a moduli space of (twisted) stable sheaves on a K3 surface.\\

Denote by $\Cd$ the Hassett (Noether-Lefschetz) divisor in the moduli space of special cubic fourfolds of discriminant $d$. A special cubic fourfold $Y$ belongs to $\CC_d$ if there is a primitive sublattice $K \subset A(Y)$ of rank 2 and discriminant $d$ containing the class $h^2$. The integer $d$ represents the determinant of the intersection form on $K$. If $S\subset Y$ is an algebraic surface (not homologous to a complete intersection surface), then:
$$S^2:=\langle S,S\rangle=c_2(N_{S/Y})=6(h|S)^2 + 3(h|S \cdot K_S) + K_S^2-\chi(S),$$ where $\chi(S)$ is the topological Euler characteristic of the surface $S$. 

\begin{remark} Assume that \(\text{rk}\,A(Y) = 2\) and fix a basis $h^2, S$ of \(A(Y)\). Then a basis for the rank-2 lattice \(K\) with discriminant \(d\) can be given by $h^2, \alpha = xh^2 + yS$, where \(x, y \in \mathbb{Z}\). Now, by computing the determinant of the bilinear form in the basis of \(K\), we obtain \(d = Dy^2\), where \(D\) is the determinant of the bilinear form in \(A(Y)\). By \cite[Proposition 3.2.4]{Has00}, we find that \(d\) and \(D\) coincide. Then, the fact that \(K \subseteq A(Y)\) is primitive and preserves \(h^2\) implies that \(y = \pm1\). Therefore, the basis of \(K\) is $h^2, xh^2 \pm S$ for any \(x \in \mathbb{Z}\). A special cubic fourfold is called typical if it has unique labelling.
\end{remark}

We recall some numerical characterizations of the divisors \(\CC_d\) that establish the variety of lines on cubic fourfolds is birational to specific examples of hyperkähler manifolds of K3$^{[n]}$-type. 

\begin{theorem}[\cite{Has00}, \cite{Add16}, \cite{Huy17}]\label{Theorem: moduli of cubic fourfolds}
Suppose that $Y \in \Cd$.
\begin{itemize}
    \item The divisor $\Cd$ is irreducible and nonempty if and only if 
    \begin{equation}\label{cond be non empty}
        d > 6 \text{ and } d \equiv 0,2 \pmod{6}. \tag{$\ast$} 
    \end{equation}
\end{itemize}
Furthermore, if $Y \in \Cd$ with $d$ satisfies the condition (\ref{cond be non empty}), then:
\begin{itemize}
    \item $F(Y)$ is birational to a Hilbert scheme of two points for some K3 surface if and only if $d$ is of the form
    \begin{equation}\label{cond be bir to Hilbert sch}
         d=\frac{2k^2+2k+2}{a^2} \text{ for some } k, a \in \Zz. \tag{$\ast\ast\ast$} 
    \end{equation}
    
    \item $F(Y)$ is birational to a moduli space of stable sheaves on a K3 surface if and only if 
    \begin{equation}\label{cond be bir to moduli}
        d \text{ is not divisible by } 4,9 \text{ or any odd prime } p\equiv 2 \pmod{3}. \tag{$\ast\ast$} 
    \end{equation}
    
    \item  $F(Y)$ is birational to a moduli space of twisted stable sheaves on a K3 surface if $ d/2=\Pi p_i^{n_i}$ satisfies
    \begin{equation}\label{cond be bir to moduli twisted}
         n_i \equiv 0 \pmod 2 \text{ for all } p_i\equiv 2 \pmod 3 \tag{$\ast\ast'$}    
    \end{equation}
\end{itemize}
\end{theorem}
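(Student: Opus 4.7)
The plan is to treat the four characterizations separately, since they correspond to different lattice-theoretic conditions, but all pivot on one underlying principle: a special cubic fourfold $Y \in \CC_d$ corresponds to the data of a primitive rank-$2$ sublattice $K \ni h^2$ of discriminant $d$ in the middle cohomology $H^4(Y,\Zz)$, and the birational geometry of $F(Y)$ is read off from the Hodge structure on the orthogonal complement $K^{\perp}$. All claims follow from Nikulin-type existence criteria for primitive embeddings of lattices, combined with the Torelli theorem for cubic fourfolds and, for the later parts, the Markman embedding for $F(Y)$.

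For the first assertion, Hassett shows that $Y \in \CC_d$ exists if and only if $K$ embeds primitively into the odd unimodular lattice $H^4(Y,\Zz) \cong I_{21,2}$ preserving $h^2$; equivalently, $\langle h^2 \rangle_K^{\perp}$ embeds primitively into the primitive cohomology $A_2 \oplus U^{\oplus 2} \oplus E_8(-1)^{\oplus 2}$. I would compute the discriminant group $\mathbf{Z}/3$ of the $A_2$-summand together with the parity constraints inherited from $I_{21,2}$: the $A_2$ factor forces the congruence $d \equiv 0, 2 \pmod 6$, and a direct check of the small cases rules out $d \leq 6$ (where any potential labeling would be non-primitive or give a degenerate cubic). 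Irreducibility of $\CC_d$ for these $d$ follows from a monodromy argument applied to the period map, exploiting the transitivity of the orthogonal group of the transcendental lattice on admissible periods.

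For the second assertion, via the Abel--Jacobi isomorphism $H^4(Y,\Zz)_{\prim}(-1) \cong H^2(F(Y),\Zz)_{\prim}$ and global Torelli, $F(Y)$ is birational to $\operatorname{Hilb}^2(S)$ precisely when the transcendental lattice of $F(Y)$ admits a primitive Hodge embedding into $H^2(\operatorname{Hilb}^2(S),\Zz) \cong U^{\oplus 3} \oplus E_8(-1)^{\oplus 2} \oplus \langle -2 \rangle$ for some K3 surface $S$. Decomposing such an embedding along the $\langle -2 \rangle$-summand and applying Nikulin, the existence reduces to solvability of the diophantine equation $d a^2 = 2k^2 + 2k + 2$ in integers, which is exactly Hassett's condition \eqref{cond be bir to Hilbert sch}.

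The last two assertions follow from the Addington--Huybrechts criteria: via the Markman embedding $H^2(F(Y),\Zz) \hookrightarrow \widetilde\Lambda$ with orthogonal generated by $v_X$ of square $2n-2 = 2$, $F(Y)$ is birational to a moduli space of (respectively, twisted) stable sheaves on a K3 surface precisely when $\widetilde\Lambda^{1,1}$ contains a primitive copy of $U$ (respectively, some $U(k)$). Translating these embedding conditions into the discriminant form of the rank-$3$ lattice $\widetilde\Lambda^{1,1} \supset \langle v_X \rangle \oplus K$ and invoking classical genus theory for binary quadratic forms yields the prime-factorization conditions on $d$ stated in \eqref{cond be bir to moduli} and \eqref{cond be bir to moduli twisted}. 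I expect the main obstacle to be the careful lattice computation in the first part---in particular, excluding $d = 2, 6$ while allowing every larger $d \equiv 0, 2 \pmod 6$---whereas, once the correct lattice dictionary is set up, the remaining parts reduce to elementary congruence arithmetic on the discriminant.
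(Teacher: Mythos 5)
Your outline is, in substance, a compressed roadmap of the proofs in the literature, whereas the paper's own ``proof'' of this statement is pure attribution: condition \eqref{cond be non empty} is quoted from \cite[Theorem 1.0.1]{Has00}, condition \eqref{cond be bir to Hilbert sch} from \cite[Proposition 6.1.3, Theorem 6.1.4]{Has00} and \cite[Theorem 2]{Add16}, condition \eqref{cond be bir to moduli} from \cite[Theorem 1]{Add16}, and condition \eqref{cond be bir to moduli twisted} from \cite[Proposition 4.1]{Huy17}. So you are attempting to re-derive four substantial theorems rather than cite them; the architecture you describe (lattice-theoretic existence plus period map and monodromy for \eqref{cond be non empty}; the Markman embedding and copies of $U$, resp.\ $U(k)$, in $\widetilde\Lambda^{1,1}$ for \eqref{cond be bir to moduli} and \eqref{cond be bir to moduli twisted}) is indeed how those papers proceed, but as a standalone argument several steps are not yet proofs, and one is not correct as stated.

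Concretely: (i) for the Hilbert-scheme criterion, a primitive Hodge embedding of the transcendental lattice of $F(Y)$ into $U^{\oplus 3}\oplus E_8(-1)^{\oplus 2}\oplus\langle -2\rangle$ does not by itself yield a birational map to a Hilbert scheme of two points; birationality of manifolds of K3$^{[n]}$-type requires a Hodge isometry of the full $H^2$ that is a parallel transport operator (Verbitsky--Markman Torelli), and extending an isometry of transcendental lattices across the N\'eron--Severi part is a nontrivial Nikulin gluing problem. Addington's actual criterion is the existence of an isotropic class $w\in\widetilde\Lambda^{1,1}$ with $\langle w, v_X\rangle=1$ --- it is this pairing condition with $v_X$, not the mere existence of an embedding of the transcendental lattice, that produces the Pell-type equation behind \eqref{cond be bir to Hilbert sch}. (ii) For \eqref{cond be non empty}, the congruence is an easy discriminant computation, but nonemptiness, the exclusion of $d=2,6$, and irreducibility are not a ``direct check'': they rest on the description of the image of the period map (the loci corresponding to $d=2,6$ consist of degenerate or nodal cubics) and on transitivity of the monodromy group on labellings of a given discriminant, which you assert but do not establish. (iii) Your genus-theory translation for \eqref{cond be bir to moduli} and \eqref{cond be bir to moduli twisted} is carried out only for the rank-three lattice of a typical $Y\in\Cd$; since the theorem concerns arbitrary $Y\in\Cd$, where $\widetilde\Lambda^{1,1}$ may be larger and $Y$ may carry several labellings, one needs the additional step (present in \cite{Add16,Huy17}) showing that $U$, resp.\ $U(k)$, embeds in $\widetilde\Lambda^{1,1}$ if and only if \emph{some} labelling of $Y$ satisfies the stated divisibility condition; note also that the last bullet is only an implication, so no converse is required there. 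Given that the paper itself only cites these results, either do the same, or be prepared to supply the missing Torelli, period-map and gluing arguments in full.
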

\begin{proof}
    Condition \eqref{cond be non empty} was established in \cite[Theorem 1.0.1]{Has00}; condition \eqref{cond be bir to Hilbert sch} was demonstrated in \cite[Proposition 6.1.3, Theorem 6.1.4]{Has00} and \cite[Theorem 2]{Add16}; condition \eqref{cond be bir to moduli} was proven in \cite[Theorem 1]{Add16}; and condition \eqref{cond be bir to moduli twisted} was shown in \cite[Proposition 4.1]{Huy17}.
\end{proof}

We trivially remark that:

\begin{center}
    (\ref{cond be bir to Hilbert sch}) $\Rightarrow$ (\ref{cond be bir to moduli}) $\Rightarrow$ (\ref{cond be bir to moduli twisted}) $\Rightarrow$ (\ref{cond be non empty}). 
\end{center}

\begin{remark}
    For primes \( p_i \equiv 0, 1 \pmod{3} \) in \eqref{cond be bir to moduli twisted}, the exponent \( n_i \) can be even or odd. For example, if \( d = 14 \), then \( d/2 = 7^1 \) (or if \( d = 18 \), then \( d/2 = 3^2 \) ) is allowed. The failure of condition \eqref{cond be bir to moduli twisted} is equivalent to stating that \(d\) satisfies condition \eqref{cond be non empty} and that there exists at least one prime \(p_i \equiv 2 \pmod 3\) in its prime factorization with an odd exponent. This is exactly the condition \eqref{eq: not be a moduli twisted} mentioned in the introduction.
\end{remark}

\begin{remark}
    One of the most studied divisors in the moduli of cubic fourfolds is $\CC_8$. Note that $Y \in \mathcal{C}_8$ satisfies condition \eqref{cond be non empty} but not \eqref{cond be bir to moduli}. Nevertheless, Macrì and Stellari showed that the Fano variety $F(Y)$ for such cubic fourfolds is isomorphic to a moduli space of twisted stable complexes on a K3 surface. In fact, $d=8$ satisfies condition \eqref{cond be bir to moduli twisted} implying that $F(Y)$ is birational to a moduli space of twisted stable sheaves on a K3 surface. Furthermore, under the assumption that $Y$ is \textit{very generic}, this birational map is either an isomorphism or a Mukai flop (in the dual plane $P^\vee \subset F(Y)$); see \cite[Theorem 1.4]{MacSte12}. Cubic fourfolds in that divisor correspond to those containing a plane, typically containing other surfaces such as quadric surfaces and del Pezzo surfaces. Additionally, it was noted in \cite[Remark 4.3.ii]{MacSte12} that $F(Y)$ is birational to a moduli space of non-twisted stables sheaves of a K3 surface if additionally assume $\rk \NS(F(Y))\geq 13$. 
\end{remark}

Cubic fourfolds with $\rk A(Y) \geq 3$ can admit more than one label. Indeed, Hassett divisors can intersect, revealing the richness of the geometry of cubic fourfolds. It was noted by Fano in \cite{Fan43} that a cubic fourfold containing two disjoint planes possesses a marking with discriminant 14, meaning that $Y \in \mathcal{C}_8 \cap \mathcal{C}_{14}$, cf. \cite[\Ss 4]{Has00}.

\begin{proposition}{\cite[Theorem 7]{YanYu20}}\label{theorem: inter of Hassett div} Any two Hassett divisors  $\CC_{d_1}$, $\CC_{d_2}$ satisfying condition (\ref{cond be non empty}) intersects, i.e., $\CC_{d_1}\cap \CC_{d_2} \neq \emptyset$. Moreover, there exists a smooth cubic fourfold $Y \in \CC_{d_1}\cap \CC_{d_2}$ with a rank 3 lattice $A(Y)$ of discriminant 
\begin{equation*}
    |A(Y)| = \begin{cases} 
      \frac{d_1d_2-1}{3}, & \text{if } d_1,d_2\equiv 2 \pmod 6  \\
      \frac{d_1d_2}{3} & \text{otherwise.} 
   \end{cases}
\end{equation*}  
\end{proposition}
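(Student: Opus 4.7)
The plan is to construct, via direct lattice computation, an abstract positive-definite rank-three lattice $M$ containing a distinguished class $h^2$ of square three together with two primitive rank-two sublattices $K_1, K_2$ of discriminants $d_1, d_2$, and then to realize $M$ as the algebraic lattice $A(Y)$ of a smooth cubic fourfold via the period map.

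First, I would parametrize $M = \Zz h^2 \oplus \Zz \alpha_1 \oplus \Zz \alpha_2$ through its Gram data $a_i := h^2 \cdot \alpha_i$, $b_i := \alpha_i^2$, $c := \alpha_1 \cdot \alpha_2$, so that $\text{disc}\langle h^2, \alpha_i \rangle = 3b_i - a_i^2 = d_i$. Setting $c = 0$ and exploiting the divisibility afforded by condition \eqref{cond be non empty}, choose
\[
(a_i, b_i) = \begin{cases}(0,\, d_i/3) & \text{if } d_i \equiv 0 \pmod 6, \\ (1,\, (d_i+1)/3) & \text{if } d_i \equiv 2 \pmod 6.\end{cases}
\]
A direct expansion of the $3 \times 3$ Gram determinant -- which reduces to $3 b_1 b_2 - a_1^2 b_2 - a_2^2 b_1$ for $c = 0$ -- recovers $(d_1 d_2 - 1)/3$ when both $d_i \equiv 2 \pmod 6$ and $d_1 d_2/3$ otherwise. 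Sylvester's criterion yields positive-definiteness ($3 > 0$, $d_i > 6$, $\det M > 0$), and primitivity of each $K_i \subset M$ is automatic since $M/K_i \cong \Zz$ is torsion-free.

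Next comes the geometric realization. The sublattice $N := \{v \in M : v \cdot h^2 = 0\}$ is even, positive-definite, and of rank two, while the primitive cohomology $(h^2)^\perp \cong A_2 \oplus U^{\oplus 2} \oplus E_8^{\oplus 2}$ has signature $(20, 2)$ with ample room, so Nikulin's embedding criterion supplies a primitive embedding $N \hookrightarrow (h^2)^\perp$. A period point chosen in the $20$-dimensional period domain orthogonal to the image of $N$ yields, by the surjectivity of the period map for smooth cubic fourfolds of Laza and Looijenga, a smooth $Y$ with $A(Y) \supseteq M$; the exclusions $\CC_2$ and $\CC_6$ from the image are avoided automatically since $d_i > 6$. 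A final dimension count on the Noether-Lefschetz stratification -- the stratum $\{\rk A(Y) \geq 4\}$ has codimension at least three in the $20$-dimensional moduli -- shows that the generic member of $\CC_{d_1} \cap \CC_{d_2}$ has $A(Y) = M$ of rank exactly three, establishing non-emptiness and the discriminant formula simultaneously.

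The principal technical obstacle is ensuring that the constructed $M$ realizes $K_1, K_2$ as the prescribed labellings rather than some other pair of rank-two sublattices through $h^2$, so that the resulting $Y$ truly lies in both Hassett divisors. The explicit choice with $c = 0$ pins this down by construction: each $K_i$ is the saturation of $\langle h^2, \alpha_i \rangle$ in $M$, and the chosen period simultaneously satisfies both orthogonality conditions defining $\CC_{d_1}$ and $\CC_{d_2}$.
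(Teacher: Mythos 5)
The paper does not prove this proposition itself; it quotes it from \cite{YanYu20}, whose argument has the same skeleton as yours: write down an explicit rank-three positive definite lattice containing $h^2$ with two labellings of discriminants $d_1,d_2$, compute its determinant, and realize it as $A(Y)$ of a smooth cubic fourfold via the (Laza--Looijenga) description of the image of the period map, which excludes exactly the discriminant $2$ and $6$ loci. Your Gram-matrix construction, the determinant computation giving $(d_1d_2-1)/3$ resp. $d_1d_2/3$, positive definiteness, and the primitivity of each $K_i$ in $M$ are all correct and essentially identical to the cited proof.

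The gap is in the realization step. You embed only $N=M\cap (h^2)^{\perp}$ primitively into $A_2\oplus U^{\oplus 2}\oplus E_8^{\oplus 2}$ and choose a period orthogonal to $N$; but this forgets the gluing between $\langle h^2\rangle$ and $N$ inside $M$. When $d_i\equiv 2\pmod 6$ one has $a_i=1$, and $M$ is a proper (index $3$) overlattice of $\langle h^2\rangle\oplus N$: the classes $\alpha_i$ are \emph{not} in $\langle h^2\rangle\oplus N$. For a generic primitive embedding of $N$ the algebraic lattice you obtain is the saturation of $\langle h^2\rangle\oplus N$ in $H^4(Y,\Zz)$, which need not contain a copy of $M$ at all; in that case the available labellings through $h^2$ are $\langle h^2, 3\alpha_i-h^2\rangle$, of discriminant $9d_i$, so the construction does not even place $Y$ in $\CC_{d_1}\cap\CC_{d_2}$, and the discriminant comes out as $3(d_1d_2-1)$ rather than $(d_1d_2-1)/3$. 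The fix (and what \cite{YanYu20} actually does) is to embed $M$ itself primitively into $H^4(Y,\Zz)\cong (+1)^{\oplus 21}\oplus(-1)^{\oplus 2}$ with $h^2$ mapping to the square-three hyperplane class, e.g.\ by exhibiting explicit integral vectors, and then take a very general period orthogonal to $M\cap (h^2)^{\perp}$; primitivity of this embedding is what forces $A(Y)=M$ (your closing dimension count only gives $\rk A(Y)=3$, which does not exclude a finite-index overlattice of $M$ with a different discriminant). Relatedly, the assertion that the $\CC_2$ and $\CC_6$ exclusions are ``avoided automatically since $d_i>6$'' is not an argument: one must check that $M$ contains \emph{no} rank-two primitive sublattice through $h^2$ of discriminant $2$ or $6$. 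For your $M$ this does hold, since any such sublattice has discriminant $d_1y^2+d_2z^2-2a_1a_2yz\geq 7$ for $(y,z)\neq(0,0)$, but this verification must be made explicit, as a different choice of the off-diagonal entries could easily produce a forbidden labelling even with $d_1,d_2>6$.
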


\begin{proposition}\label{Prop: intersection two hassett in rk 3} Let $Y$ be a cubic fourfold in $\CC_{d_1}\cap\CC_{d_2}$ where $d_1,d_2$ satisfy condition \eqref{cond be non empty}. Assume that $\rk A(Y)=3$. Then, $Y \in \CC_{d}$ where $d=d_1y^2 + 2\lambda_{d_1d_2}yz+d_2z^2$ satisfies condition \eqref{cond be non empty}, $y, z \in \Zz$ with $\gcd(y,z)=1$, and
    $$\lambda_{d_1d_2}=
    \begin{cases} 
    \pm 1 & \text{if } d_1, d_2 \equiv 2 \pmod 6 \\
     0 & \text{otherwise.} 
    \end{cases}
    $$ 
\end{proposition}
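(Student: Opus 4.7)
My plan is to parametrize primitive rank-two sublattices of $A(Y)$ containing $h^2$, compute their discriminants as values of a binary quadratic form in coprime pairs $(y,z)$, and then combine $\langle h^2,h^2\rangle=3$ with the hypothesis $d_1,d_2\equiv 0,2\pmod 6$ to determine the middle coefficient modulo~$3$.

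First, since $Y\in\CC_{d_1}\cap\CC_{d_2}$, I would pick primitive rank-two sublattices $K_i=\langle h^2,S_i\rangle\subset A(Y)$ of discriminant $d_i$. Because $\rk A(Y)=3$, the classes $h^2,S_1,S_2$ span a finite-index sublattice of $A(Y)$. Setting $a_i=\langle h^2,S_i\rangle$, $b_i=\langle S_i,S_i\rangle$ and $c=\langle S_1,S_2\rangle$, the identity $d_i=3b_i-a_i^2$ holds by expanding the discriminant of $K_i$. For any coprime pair $(y,z)\in\Zz^2$ and any $x\in\Zz$, the class $\alpha=xh^2+yS_1+zS_2$ is primitive modulo $h^2$, so $K:=\langle h^2,\alpha\rangle$ is a primitive rank-two sublattice of $A(Y)$ containing $h^2$. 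A direct computation, with the $x$-dependence cancelling exactly as in the preceding remark, yields
\[
d\;=\;3\langle\alpha,\alpha\rangle-\langle h^2,\alpha\rangle^2\;=\;d_1 y^2 + 2\lambda\, yz + d_2 z^2,\qquad \lambda:=3c-a_1 a_2.
\]
Hence $Y\in\CC_d$ whenever the resulting $d$ satisfies~\eqref{cond be non empty}.

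Next I would pin down the residue $\lambda\bmod 3$ using $d_i\equiv -a_i^2\pmod 3$. If $d_1\equiv 0\pmod 6$ then $a_1\equiv 0\pmod 3$, forcing $\lambda\equiv 0\pmod 3$; the symmetric statement applies when $d_2\equiv 0\pmod 6$. If instead both $d_i\equiv 2\pmod 6$, then $a_1^2,a_2^2\equiv 1\pmod 3$, so $a_1 a_2\not\equiv 0\pmod 3$ and $\lambda\equiv\pm 1\pmod 3$. Because the $\mathrm{GL}_2(\Zz)$ substitution $y\mapsto y+kz$ preserves the set of integers represented by the form while shifting $\lambda$ by $kd_1$, I may replace $\lambda$ by the canonical representative $\lambda_{d_1d_2}\in\{0,\pm 1\}$ in the stated cases.

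Finally I would verify condition~\eqref{cond be non empty} for the resulting $d$: it is even since $d_1,d_2$ are, the mod-$3$ analysis gives $d\equiv 0\text{ or }2\pmod 3$ in both cases, and choosing $|y|,|z|$ large enough makes $d>6$. The main subtlety --- and the step I expect to require the most care --- is the reconciliation in the previous paragraph: showing rigorously that the genuine integer invariant $3c-a_1 a_2$, which encodes how $K_1+K_2$ sits inside $A(Y)$, can be replaced by the canonical representative $\lambda_{d_1d_2}\in\{0,\pm 1\}$ while keeping $K$ primitive in $A(Y)$.
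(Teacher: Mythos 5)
Your setup agrees with the paper's: you write $\alpha=xh^2+yS_1+zS_2$, note that the $x$-terms cancel, and arrive at the same expression $d=3\alpha^2-\langle h^2,\alpha\rangle^2=d_1y^2+2(3c-a_1a_2)yz+d_2z^2$. The genuine gap is in the step where you pass from a congruence on $\lambda=3c-a_1a_2$ to the exact values $\lambda_{d_1d_2}\in\{0,\pm1\}$. Your mod-$3$ analysis only shows $\lambda\equiv 0\pmod 3$ (resp. $\lambda\equiv\pm1\pmod 3$), and the proposed $\mathrm{GL}_2(\Zz)$ normalization does not repair this: the substitution $y\mapsto y+kz$ shifts the middle coefficient by $kd_1$ (not by multiples of $3$), and it simultaneously replaces the $z^2$-coefficient $d_2$ by $d_1k^2+2\lambda k+d_2$, so it cannot bring $\lambda$ into $\{0,\pm1\}$ while keeping the form in the stated shape $d_1y^2+2\lambda' yz+d_2z^2$. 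Nothing in your argument excludes, say, $d_1,d_2\equiv 0\pmod 6$ with $3c-a_1a_2=3$. The paper pins $\lambda$ down differently: expanding the Gram determinant of $A(Y)$ in the basis $h^2,S,T$ gives $3\,|A(Y)|=d_1d_2-(3c-ab)^2$, and then the discriminant values of $A(Y)$ supplied by Proposition \ref{theorem: inter of Hassett div} (namely $(d_1d_2-1)/3$ when $d_1,d_2\equiv 2\pmod 6$ and $d_1d_2/3$ otherwise) force $(3c-ab)^2=1$ or $0$. Some input of this discriminant type is essential; the congruence classes of $d_1,d_2$ alone do not determine $\lambda$.

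A secondary issue: you only assume $h^2,S_1,S_2$ span a finite-index sublattice of $A(Y)$, but then the primitivity of $K=\langle h^2,\alpha\rangle$ in $A(Y)$ does not follow from $\gcd(y,z)=1$; primitivity inside the span is not primitivity inside $A(Y)$, and a non-saturated $K$ of determinant $d$ only yields a label of discriminant $d/k^2$ for some $k$. The paper avoids this by working with an actual basis $h^2,S,T$ of $A(Y)$ realizing the two labels, for which the $\gcd(y,z)=1$ argument does give saturation of $\langle h^2,\alpha\rangle$. Also note that the discriminant identity above is stated for such a basis, so your finite-index setup would distort it by the square of the index. Both points need to be addressed before your argument establishes the proposition.
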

\begin{proof}
    Set by $A(Y)=\langle h^2, S,T \rangle$ a basis of the algebraic cohomology of $Y$ with intersection bilinear form
    
    \begin{table}[h!]
    \begin{tabular}{c|ccc}
          & $h^2$ & $S$ & $T$ \\ \hline
    $h^2$ & 3     & $a$ & $b$ \\
    $S$   & $a$   & s   & $c$ \\
    $T$   & $b$   & $c$ & $t$
    \end{tabular}
    \end{table}
    
    where $d_1=3s-a^2$ and $d_2=3t-b^2$ satisfy condition \eqref{cond be non empty} for two labels $K_{d_1}$ and $K_{d_2}$. Define $$\alpha:=xh^2+yS+zT \in A(Y).$$
    It is straightforward to show that 
    \begin{align}
    \left\{
    \begin{array}{ll}
    \alpha^2 & = 3x^2 +2axy+y^2s+2xzb+2zyc+z^2t, \\
    \langle h^2,\alpha \rangle & = 3x+ya+zb.
    \end{array}
    \right.
    \end{align}
    In terms of the base $h^2, \alpha$, we obtain $Y \in \CC_d$ where \begin{eqnarray}\label{eq: d in new base h alpha}
        d &=& y^2(3s-a^2)+2yz(3c-ab)+z^2(3t-b^2) \nonumber \\
        &=& d_1 y^2 + 2yz(3c-ab) + d_2z^2.
    \end{eqnarray}
    By Proposition \ref{theorem: inter of Hassett div}, the discriminant of $A(Y)$ satisfies
    \begin{eqnarray*}
    3|A(Y)|&=& 9st+6abc-3b^2s-3a^2t-9c^2\\ 
    &=& \begin{cases} 
    (3s -a^2)(3t-b^2)-1 & \text{if } d_1,d_2 \equiv 2 \pmod 6, \\
    (3s -a^2)(3t-b^2) & \text{otherwise.} 
    \end{cases}
    \end{eqnarray*}

    This imposes the condition, 
    $$\lambda_{d_1d_2}:=c-ab=
    \begin{cases} 
    \pm 1 & \text{if } d_1, d_2 \equiv 2 \pmod 6 \\
     0 & \text{otherwise.} 
    \end{cases}
    $$ 
    Replacing this in \eqref{eq: d in new base h alpha}, we obtain the possibilities of $d$. Assuming $\gcd(y,z)=1$ we obtain $\langle h^2,\alpha \rangle =K_d \subset A(Y)$ is primitive.
\end{proof}

\begin{corollary}
    Suppose that $\rk A(Y)=3$. If $Y$ admits two disjoint planes, then $Y \in \CC_8 \cap \CC_{14}$. In particular, $Y$ contains a quartic scroll or a quintic del Pezzo surface.
\end{corollary}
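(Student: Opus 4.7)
The plan is to perform a direct calculation inside the rank-$3$ algebraic lattice $A(Y)$ and then appeal to a classical construction of Fano.

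First, I would write down the Gram matrix of $\langle h^2, P_1, P_2\rangle \subset A(Y)$. We have $h^2\cdot h^2 = 3$ and $h^2\cdot P_i = 1$, while $P_i^2 = 3$ follows from the standard normal-bundle computation via the short exact sequence
\begin{equation*}
0 \to N_{P_i/Y} \to N_{P_i/\Pp^5} \to N_{Y/\Pp^5}|_{P_i} \to 0,
\end{equation*}
which gives $c_2(N_{P_i/Y}) = 3$. The assumption that $P_1$ and $P_2$ are disjoint in $\Pp^5$ (hence in $Y$) forces $P_1\cdot P_2 = 0$. The Gram determinant of this rank-$3$ sublattice is $21 = 3\cdot 7$, which is squarefree; combined with $\rk A(Y) = 3$, this forces $[A(Y):\langle h^2,P_1,P_2\rangle]^2 \mid 21$, so the index equals $1$ and $A(Y) = \langle h^2, P_1, P_2\rangle$.

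Next, I would extract the two Hassett labels. The rank-$2$ sublattice $\langle h^2, P_1\rangle$ is primitive in $A(Y)$ (the quotient is $\Zz$, freely generated by the class of $P_2$) with Gram determinant $3\cdot 3 - 1 = 8$, so it is a $K_8$-label and $Y\in \CC_8$. Similarly $\langle h^2, P_1+P_2\rangle$ is primitive with Gram determinant $3\cdot 6 - 2^2 = 14$, giving $Y\in \CC_{14}$. Hence $Y \in \CC_8 \cap \CC_{14}$, as asserted.

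For the surface statement, I would point to two distinguished classes in $A(Y)$: $S_4 := 2h^2 - P_1 - P_2$ satisfies $h^2\cdot S_4 = 4$ and $S_4^2 = 10$, matching the numerical invariants of a quartic scroll; whereas $S_5 := h^2 + P_1 + P_2$ satisfies $h^2\cdot S_5 = 5$ and $S_5^2 = 13$, matching those of a quintic del Pezzo. Note the linkage relation $S_4 + S_5 = 3h^2$. By the integral Hodge conjecture for cubic fourfolds \cite[Theorem 1.4]{Voi13} these classes are effective; Fano's classical construction for cubic fourfolds containing two skew planes (see \cite[\S 4]{Has00}) then realizes one of them as a smooth irreducible surface of the stated type. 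The main obstacle is precisely this last step: passing from an integral Hodge class to an explicit smooth irreducible surface is not purely lattice-theoretic, and is handled by the classical Fano sweep-out, or equivalently by liaison in a cubic surface section of $Y$, which is the geometric content of the identity $S_4 + S_5 = 3h^2$.
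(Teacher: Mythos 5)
Your argument is correct and, at its core, is the same lattice computation as the paper's: both proofs exhibit the labels of discriminant $8$ and $14$ inside $A(Y)$ and single out the same two classes $2h^2-P_1-P_2$ and $h^2+P_1+P_2$. The difference is in the scaffolding. The paper deduces $Y\in\CC_{14}$ from Proposition \ref{Prop: intersection two hassett in rk 3} applied with $d_1=d_2=8$ (taking $y=z=\pm1$ in $d=8y^2\pm2yz+8z^2$), whereas you compute directly with the Gram matrix of $\langle h^2,P_1,P_2\rangle$, using disjointness to get $P_1\cdot P_2=0$, and you additionally check that this span is all of $A(Y)$ (determinant $21$ squarefree), which makes the primitivity of both rank-two labels explicit --- a point the paper leaves implicit. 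Your route buys independence from Proposition \ref{Prop: intersection two hassett in rk 3}, whose normalization $\lambda_{d_1d_2}=\pm1$ rests on the discriminant statement of Proposition \ref{theorem: inter of Hassett div}; the paper's route is shorter given those earlier results. On the final assertion the two arguments are at the same level of rigor: the paper merely states that the two classes ``correspond to'' a quartic scroll and a quintic del Pezzo, and your closing appeal to the classical Fano construction in \cite[\S 4]{Has00} is the right substitute --- note only that the integral Hodge conjecture gives algebraicity, not effectivity or smoothness, of these classes, so (as you yourself flag) the geometric realization really does come from the classical construction rather than from \cite{Voi13}.
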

\begin{proof}
Since $Y \in \mathcal{C}_8$ by assuming at least one plane, Proposition \ref{Prop: intersection two hassett in rk 3} implies that $Y \in \mathcal{C}_d$, where $d = 8y^2 \pm 2yz + 8z^2$ for any $y, z \in \mathbb{Z}$, and in particular, for $y = z = \pm 1$. In fact, $\Sigma_4 = 2h^2 - P_1 - P_2$ generates an algebraic class in $A(Y)$ corresponding to a quartic scroll, and $T = h^2 + P_1 + P_2$ corresponds to a quintic del Pezzo surface.

\end{proof}

\section{A hyperk\"ahler manifold of Picard number 3}\label{section: example in rk 3 with F(Y) non birational to moduli}

To find a Hyperkähler manifold of Picard rank three that is not isomorphic to a moduli space of stable or twisted stable sheaves on a K3 surface, we consider the variety of lines on a cubic fourfold \( Y \) intersecting two specific Hassett divisors, \(\CC_{d_1}\) and \(\CC_{d_2}\), as ensured by Proposition \ref{theorem: inter of Hassett div}. However, we will see that assuming \( Y \in \CC_{d_1} \cap \CC_{d_2} \) for \( d_1, d_2 \) that satisfy condition \eqref{cond be non empty} but not \eqref{cond be bir to moduli twisted} is not always sufficient to ensure non-birationality to a moduli space of twisted stable sheaves on a K3 surface when the rank of \( A(Y) \) is three. This is the situation that is always true for the special generic case.

\begin{proposition}
    Let $Y$ be a cubic fourfold with a rational normal cubic scroll $\Sigma_3$ and a Veronese surface $V$ in $Y$. Then, 
    \begin{enumerate}
        \item $Y$ does not contain a $\Pp^2$;
        \item $Y$ does not contain a quartic scroll;
        \item the variety of lines $F(Y)$ is isomorphic to a moduli space of twisted stable sheaves on some K3 surface.
    \end{enumerate}
\end{proposition}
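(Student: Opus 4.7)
The plan is to determine the algebraic lattice $A(Y) = \langle h^2, \Sigma_3, V \rangle$ explicitly, enumerate the labels of $Y$, and then invoke the numerical criteria of Section \ref{section: main defns of cubic}. Using the formula $S^2 = 6(h|S)^2 + 3(h|S \cdot K_S) + K_S^2 - \chi(S)$ applied to the cubic scroll $\Sigma_3 \cong \mathbb{F}_1$ (embedded by $|C_0 + 2f|$) and to the Veronese surface $V \cong \Pp^2$ (embedded by $|\Oo(2)|$), I would first verify $\Sigma_3^2 = 7$ and $V^2 = 12$, which together with $\langle h^2, \Sigma_3 \rangle = 3$ and $\langle h^2, V \rangle = 4$ confirm $Y \in \CC_{12} \cap \CC_{20}$. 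The remaining entry $\langle \Sigma_3, V \rangle$ is then determined by Proposition \ref{Prop: intersection two hassett in rk 3}: since $12 \equiv 0 \pmod 6$ and $20 \equiv 2 \pmod 6$ are not both congruent to $2$, one has $\lambda_{12, 20} = 0$ and hence $\langle \Sigma_3, V \rangle = 4$. The Gram matrix of $A(Y)$ in the basis $(h^2, \Sigma_3, V)$ is therefore
\[
\begin{pmatrix} 3 & 3 & 4 \\ 3 & 7 & 4 \\ 4 & 4 & 12 \end{pmatrix},
\]
of discriminant $80 = \frac{12 \cdot 20}{3}$, as predicted by Proposition \ref{theorem: inter of Hassett div}.

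By Proposition \ref{Prop: intersection two hassett in rk 3}, the labels of $Y$ are precisely the integers representable as $d = 12 y^2 + 20 z^2$ for some coprime $y, z \in \Zz$. For (1), containing a plane would entail $Y \in \CC_8$; for (2), containing a quartic scroll would entail $Y \in \CC_{14}$. Neither $8$ nor $14$ is represented by $12 y^2 + 20 z^2$: the degenerate cases $y = 0$ or $z = 0$ produce multiples of $20$ or $12$, and for $yz \neq 0$ one has $12 y^2 + 20 z^2 \geq 32$. This establishes (1) and (2).

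For (3), the choice $(y, z) = (1, 1)$ yields $Y \in \CC_{32}$. Since $32/2 = 2^4$ involves only the prime $p = 2 \equiv 2 \pmod 3$ and with even exponent, condition \eqref{cond be bir to moduli twisted} is satisfied, and Theorem \ref{Theorem: moduli of cubic fourfolds} produces a birational map $F(Y) \dashrightarrow M_H(S, \alpha, v)$ to a moduli space of $\alpha$-twisted $H$-stable sheaves on a K3 surface $S$. To promote this birationality to an isomorphism, I would replay the closing argument in the proof of Theorem \ref{main: Theorem A}: by \cite{Bri08, HuyMacSte08} one has $M_H(S, \alpha, v) \cong M_\sigma(S, v)$ for some $\sigma \in \Stab^+(S)$; by \cite[Theorems 1.1, 1.2]{BayMac14} every birational model of $M_\sigma(S, v)$ is realized as $M_{\sigma'}(S, v)$ for a $v$-generic $\sigma'$; and via \cite[Lemma 7.2]{BayMac14-projectivity} applied to an isotropic Mukai vector $v_0$ one obtains a derived-equivalent K3 surface $S' = M_{\sigma'}(S, v_0)$ with Brauer class $\beta$ such that $F(Y) \cong M_{H'}(S', \beta, w)$ is a genuine Gieseker moduli of twisted stable sheaves.

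The main obstacle is the promotion step in (3): the birational equivalence is immediate from the arithmetic criterion, but upgrading it to an isomorphism relies on the Bridgeland stability machinery of Bayer-Macrì and on the Fourier-Mukai transform along an isotropic vector. Parts (1) and (2) by contrast reduce to the elementary observation that the binary form $12 y^2 + 20 z^2$ does not represent $8$ or $14$.
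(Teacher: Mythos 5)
Your proposal is correct and takes essentially the same route as the paper's proof: the same Gram matrix for $A(Y)=\langle h^2,\Sigma_3,V\rangle$, the same computation that any label of $Y$ has discriminant of the form $12y^2+20z^2$ (so $8$ and $14$ are not represented, ruling out a plane and a quartic scroll, while $(y,z)=(1,1)$ gives $32$, which satisfies \eqref{cond be bir to moduli twisted}), and the same appeal to Theorem \ref{Theorem: moduli of cubic fourfolds} together with the wall-crossing argument from the proof of Theorem \ref{main: Theorem A} to upgrade the birational map to an isomorphism. The only deviation is in sourcing the Gram matrix: the paper quotes it, including $\langle\Sigma_3,V\rangle=4$, from \cite[\S 4]{Has00}, whereas you recover the diagonal entries from the self-intersection formula and the off-diagonal entry from the discriminant constraint underlying Propositions \ref{theorem: inter of Hassett div} and \ref{Prop: intersection two hassett in rk 3} (where the relevant relation is $3c-ab=\lambda_{d_1d_2}=0$, which is exactly what forces $c=4$), a harmless variation.
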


\begin{proof}
Denote by $A(Y)=\langle h^2, \Sigma_3,V \rangle \subset H^4(Y,\Zz)$ where $h$ corresponds to the restriction of the hyperplane class. The Gram matrix of $A(Y)$ with respect to such basis is given by \cite[\S4]{Has00} as:

    \begin{table}[h!]
    \begin{tabular}{c|ccc}
          & $h^2$ & $\Sigma_3$ & $V$ \\ \hline
    $h^2$ & 3     & 3 & 4 \\
    $\Sigma_3$   & 3   & 7   & 4 \\
    $V$   & 4   & 4 & 12
    \end{tabular}
    \end{table}

Note that $Y\not\in \CC_8$ and also $Y\not \in \CC_{14}$ but $Y \in \CC_{32}$ where the condition (\ref{cond be bir to moduli twisted}) is satisfied. Indeed, since the rank of $A(Y)$ is 3, and assuming that $Y \in \CC_8$ should ensure a class \(\alpha = xh^2 + y \Sigma_3 + zV\) with the right intersections of a label $K_8$. By computing \(\alpha^2\) and \(\langle h^2,\alpha \rangle\), we obtain:
\[
\begin{aligned}
\alpha^2 &= 3x^2 + 6xy + 7y^2 +8xz +8yz +12z^2, \\
\langle h^2,\alpha \rangle &= 3x + 3y +4z.
\end{aligned}
\]

Now, assuming that the 2-rank lattice $K_8= \langle h^2, \alpha \rangle$ has a discriminant $d=4(3y^2+5z^2)$, it is straightforward to show that there are no integers \(y, z\) satisfying the equation \(3y^2+5z^2=2\). In particular, $Y \not\in \CC_8$. Analogously, there are no integer solutions for $6y^2+10z^2=7$, and so $Y \not\in \CC_{14}$. But trivially $y=z=1$ is a solution of $3y^2+5z^2=8$, implying that $Y \in \CC_{32}$. In particular, $F(Y)$ is birational to some moduli space of sheaves on a K3 surface by Theorem \ref{Theorem: moduli of cubic fourfolds}, and the isomorphism can be obtained as the proof of Theorem \ref{main: Theorem A}. 
\end{proof}

\begin{example}
    Suppose that $d_1=20=2^2\times5$, $d_2=44=2^2\times11$, and $d_3=64=2^6$, so in particularly, $d_i/2$, $i=1,2,3$, do not satisfy condition \eqref{cond be bir to moduli twisted}. Trivially, $y=1=z$ is a solution of $$20y^2+44z^2=64.$$
    In this case, assuming $Y \in \CC_{20} \cap \CC_{44}$, the class $\alpha=h^2+S+T$ where $s:=S^2=20$ and $t:=T^2=44$, ensures that $Y \in \CC_{64}$.

   However, if $d=416$, so in particular, $d/2=208=2^4\times 13$,  satisfies condition \eqref{cond be bir to moduli twisted}. Trivially, $y=1, z=3$ is a solution of $$20y^2+44z^2=416.$$
    In this case, assuming $Y \in \CC_{20} \cap \CC_{44}$, the class $\alpha=h^2+S+3T$ where $s:=S^2=20$ and $t:=T^2=44$, ensures that $Y \in \CC_{416}$.
\end{example}

\begin{question}\label{Q: rk 3}
   For which values of \(d_1\) and \(d_2\) satisfying condition \eqref{cond be non empty} but not condition \eqref{cond be bir to moduli twisted} do we obtain all \(d\) as described in Proposition \ref{Prop: intersection two hassett in rk 3} that also do not satisfy condition \eqref{cond be bir to moduli twisted}?
\end{question}

For our purposes, it is enough to find a good pair $d_1,d_2$ to ensure that all induced $d$ are always with the desired property.

\begin{theorem}
       Let $Y$ be a cubic fourfold with a Veronese surface. Assume that $Y \in \CC_{60}$. Then, $F(Y)$ is a hyperk\"ahler manifold of dimension 4 deforming to the Hilbert scheme of two points on a K3 surface which is not birational to a moduli space of twisted stable sheaves on a K3 surface. 
       
       Moreover, $\rho (F(Y)) =3$ and if $A(Y)$ is generated by $h^2,V$ and a surface $T$, then $T$ satisfies the following numerical restrictions: $c_2(N_{T/Y})=T^2=3k^2+20$, $\langle V\cdot T\rangle=4$ and $\langle h^2\cdot T\rangle=3k$ for some $k \in \Zz_{>0}$. 
\end{theorem}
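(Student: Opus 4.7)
The plan breaks the theorem into three pieces: (i) that $F(Y)$ is a hyperkähler fourfold of K3$^{[2]}$-type; (ii) that $\rho(F(Y)) = 3$ together with the stated numerical data for $T$; and (iii) non-birationality to any moduli of twisted stable sheaves. Part (i) is the classical Beauville--Donagi result, valid for every smooth cubic fourfold. For (ii), the Abel--Jacobi isomorphism $H^4(Y,\Zz)_{\mathrm{prim}} \cong H^2(F(Y),\Zz)_{\mathrm{prim}}(-1)$ gives $\rho(F(Y)) = \rk A(Y)$, and $\rk A(Y) = 3$ is forced by the hypotheses since the Veronese surface already yields a label $\langle h^2, V\rangle$ of discriminant $20$ and the extra label $K_{60}$ cannot fit inside that rank-$2$ lattice. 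Writing $A(Y) = \langle h^2, V, T\rangle$ and adjusting $T$ by a multiple of $h^2$, I would arrange for $\langle h^2, T\rangle$ to realize $K_{60}$; the identity $3T^2 - \langle h^2 \cdot T\rangle^2 = 60$ then forces $\langle h^2 \cdot T\rangle = 3k$ for some $k \in \Zz_{>0}$ and $T^2 = 20 + 3k^2$, while the value of $\langle V \cdot T\rangle$ is recovered from the determinant of the full Gram matrix via the identity $d_1 d_2 - 3|A(Y)| = (3\langle V \cdot T\rangle - \langle h^2\cdot V\rangle\langle h^2\cdot T\rangle)^2$ underlying the proof of Proposition \ref{Prop: intersection two hassett in rk 3}.

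The core of the argument is (iii). By Proposition \ref{Prop: intersection two hassett in rk 3} with $(d_1, d_2) = (20, 60)$, every $d$ for which $Y \in \CC_d$ has the form $d = 20 y^2 + 60 z^2$ with $\gcd(y, z) = 1$ (the cross term vanishes since $60 \equiv 0 \pmod 6$). Hence $d/2 = 2 \cdot 5 \cdot (y^2 + 3 z^2)$, and the crux is the elementary claim that $5 \nmid y^2 + 3z^2$ whenever $\gcd(y,z) = 1$: the squares modulo $5$ lie in $\{0, 1, 4\}$, the values $3z^2$ lie in $\{0, 2, 3\}$, the case where both $y, z$ are divisible by $5$ is excluded by coprimality, and each remaining pair gives a nonzero sum modulo $5$. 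Therefore $v_5(d/2) = 1$ is odd, and since $5 \equiv 2 \pmod 3$ condition $(\dagger)$ holds --- equivalently condition $(\ast\ast')$ fails --- for every Hassett label of $Y$. Theorem \ref{Theorem: moduli of cubic fourfolds} then precludes any birational equivalence between $F(Y)$ and a moduli space of twisted stable sheaves on a K3 surface.

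The main obstacle is the exhaustiveness in (iii): one must be confident that the parameterization of Proposition \ref{Prop: intersection two hassett in rk 3} really captures every primitive rank-$2$ sublattice of $A(Y)$ containing $h^2$, for otherwise the mod-$5$ computation would not cover all possible labels. The rank-$3$ hypothesis is precisely what makes this work. The numerical bookkeeping in (ii) is more routine but must be carried out with consistent normalization of $T$ so the discriminant identities produce the stated values rather than an off-by-one or off-by-scalar variant.
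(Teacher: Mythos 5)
Your proposal is correct and follows essentially the same route as the paper: all labels of $Y$ are parameterized via Proposition \ref{Prop: intersection two hassett in rk 3} as $d=20y^2+60z^2$ with $\gcd(y,z)=1$, the mod-$5$ computation shows that the prime $5\equiv 2\pmod 3$ occurs with odd exponent in $d/2$ for every label so that condition \eqref{cond be bir to moduli twisted} always fails (your congruence argument is in fact stated a bit more carefully than the paper's), and the numerical data for $T$ come from $3T^2-\langle h^2\cdot T\rangle^2=60$ together with $\lambda_{20,60}=0$, which is exactly the content of your determinant identity. The only overreach is your claim that $\rk A(Y)=3$ is \emph{forced}: the two incompatible labels only give $\rk A(Y)\geq 3$, and rank exactly three must be taken as a hypothesis (as in Theorem \ref{main: theorem rk 3 non bir to moduli}), which is also how the paper's own proof implicitly proceeds when it fixes the basis $h^2,V,T$.
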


\begin{proof}
Assuming that \(Y\) contains a Veronese surface is equivalent to \(Y \in \CC_{20}\). Applying Proposition \ref{Prop: intersection two hassett in rk 3} for \(d_1 = 20 = 2^2 \times 5\) and \(d_2 = 60 = 2^2 \times 3 \times 5\), we obtain \(\lambda_{20,60} = 0\). In particular, \(Y \in \CC_d\) for all \(d\): $$ d = 20y^2 + 60z^2 = 2^2 \cdot 5(y^2 + 3z^2) \equiv 0,2 \pmod 6,$$ where $\gcd(y,z)=1$

It is straightforward to show that \(y^2 + 3z^2 = 5\) has no integer solutions, and this holds for any power of 5. This is enough to conclude that for any \(y, z \in \mathbb{Z}_{>0}\), where \(d\) satisfies \(d \equiv 0, 2 \pmod{6}\), $5$ is a prime number in the factorization of \(d/2\) with an odd exponent. It follows from Theorem \ref{Theorem: moduli of cubic fourfolds} that for all \(d\) satisfying condition \eqref{cond be non empty} but not \eqref{cond be bir to moduli twisted}, hence the variety of lines $F(Y)$ is not birational to some moduli space of twisted stable sheaves on a K3 surface.

Now, assume that $\langle h^2,V,T\rangle$ is a basis of the algebraic lattice of $Y$ with bilinear form $\langle V\cdot T\rangle=c$ and $\langle h^2\cdot T\rangle=b$.

Since \( Y \in \CC_{60} \), this implies that choosing \( K_{60} = \langle h^2, T \rangle \) as a basis, then \( 60 = 3t - b^2 \) where \( b = \langle h^2 \cdot T\rangle \) and the formula from \cite[Section 4.1]{Has00} implies that $ T^2 = c_2(N_{T/Y})$.
Hence, any $b=3k$ and $T^2=3k^2+20$,  for all $k>0$ is a solution of $3t-b^2=60$. This imposes $c=\langle T \cdot V\rangle=4k$. 

\begin{table}[H]
        \begin{tabular}{c|ccc}
              & $h^2$ & $V$ & $T$ \\ \hline
        $h^2$ & $3$     & $4$ & $3k$ \\
        $V$   & $4$   & $12$   & $4k$ \\
        $T$   & $3k$   & $4k$ & $3k^2+20$
        \end{tabular}
        \end{table}
    
\end{proof}

\begin{remark}
Similarly, we can prove that \( Y \in \mathcal{C}_{60} \cap \mathcal{C}_{180} \) ensures that \( F(Y) \) is never birational to some moduli space of twisted stable sheaves on a K3 surface since \( d/2 = 2 \times 5 \times 3(y^2 + 3z^2) \) has 5 as a prime number with an odd exponent. However, it is not enough if \( Y \in \mathcal{C}_{20} \cap \mathcal{C}_{180} \) because there exists a \( d = 2^2 \times 5(y^2 + 3^2z^2) \) where 5 has an even power. In other words, for \( y = 1 \) and \( z = 1 \), we obtain \( d = 4 \times 5 \times 5 \times 2 \), so in particular, if \( Y \in \mathcal{C}_{20} \cap \mathcal{C}_{60} \cap \mathcal{C}_{180} \), then \( Y \in \mathcal{C}_{200} \), which trivially satisfies condition \eqref{cond be bir to moduli twisted}.
\end{remark}

\section{The case of the Picard number at least 4}\label{section: rk >=4}

In this section, we prove Theorem \ref{main: theorem rk>3}, which confirms the veracity of Theorem \ref{main: Theorem A} for specific hyperk\"ahler fourfolds.

Similar to the case with Picard number three, we aim to identify appropriate \(d_1\), \(d_2\), and \(d_3\) such that for \(Y \in \CC_{d_1} \cap \CC_{d_2} \cap \CC_{d_3}\), the variety of lines \(F(Y)\) is a hyperkähler fourfold that is not birational to any moduli space of twisted stable sheaves on a K3 surface. However, we demonstrate that the inductive process fails due to numerical constraints.

As a result of the non-empty intersection of two Hassett divisors, we can derive formulas similar to those in Proposition \ref{Prop: intersection two hassett in rk 3} for the possible determinants of the bilinear matrix of \( A(Y) \) when \(\rk A(Y) = 4\).

\begin{proposition}\label{prop: inter of 3 Hassett div}
Let $Y$ be a cubic fourfold with $\rk A(Y)=4$. Suppose that $Y\in \CC_{d_1}\cap \CC_{d_2}\cap \CC_{d_3}$ where $d_1,d_2,d_3$ satisfy condition \eqref{cond be non empty}. 

Then, the discriminant $D$ of the bilinear form of $A(Y)$ is given as follows
\begin{enumerate}
    \item \label{rk 4 c1}if $d_1,d_2 \equiv 0 \pmod 6$ and $d_3\equiv 0,2 \pmod 6$, then $$D=\frac{d_1d_2d_3}{9};$$
    \item \label{rk 4 c2} if $d_1\equiv 0 \pmod 6$ and $d_2,d_3 \equiv 2 \pmod 6$, then $$D=\frac{d_1d_2d_3-d_1}{9};$$
    \item \label{rk 4 c3} if $d_1,d_2,d_3 \equiv 2 \pmod 6$, then $$D=\frac{d_1d_2d_3-d_1-d_2-d_3-2}{9}.$$
\end{enumerate}
\end{proposition}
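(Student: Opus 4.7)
The plan is to mimic the strategy of Proposition \ref{Prop: intersection two hassett in rk 3} with one extra generator, then invoke the rank-3 case pairwise. Choose a basis $h^2, S_1, S_2, S_3$ of $A(Y)$ such that $K_{d_i} := \langle h^2, S_i\rangle$ is a primitive sub-labeling of discriminant $d_i$ for each $i$. Writing $a_i := \langle h^2 \cdot S_i\rangle$, $s_i := S_i^2$ (so that $d_i = 3s_i - a_i^2$), and $c_{ij} := \langle S_i \cdot S_j\rangle$, the Gram matrix of $A(Y)$ in this basis is a symmetric $4 \times 4$ matrix $G$ with first row $(3, a_1, a_2, a_3)$ and lower-right diagonal $(s_1, s_2, s_3)$.

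Next, perform the row operations $3R_i - a_{i-1} R_1 \to R_i$ for $i = 2,3,4$; these multiply $\det G$ by $27$, kill the first column below the $(1,1)$ entry, and leave a lower-right $3 \times 3$ block $\tilde M$ with diagonal $(d_1, d_2, d_3)$ and off-diagonal entries $\lambda_{ij} := 3c_{ij} - a_i a_j$. Expanding along the first column yields the master formula
\[
9 D \;=\; \det \tilde M \;=\; d_1 d_2 d_3 - d_1 \lambda_{23}^2 - d_2 \lambda_{13}^2 - d_3 \lambda_{12}^2 + 2\lambda_{12}\lambda_{13}\lambda_{23}.
\]
To identify the $\lambda_{ij}$, apply Proposition \ref{Prop: intersection two hassett in rk 3} to each rank-3 sublattice $\langle h^2, S_i, S_j\rangle$: the analogous calculation there gives $3|\langle h^2, S_i, S_j\rangle| = d_i d_j - \lambda_{ij}^2$, forcing $\lambda_{ij}^2 = 1$ when both $d_i, d_j \equiv 2 \pmod 6$, and $\lambda_{ij} = 0$ otherwise. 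Plugging in these values yields cases (1) and (2) immediately, since at most one of the $\lambda_{ij}$'s is nonzero there.

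The hard part is case (3), where all three $\lambda_{ij} = \pm 1$ leave an ambiguous cross-term $2\lambda_{12}\lambda_{13}\lambda_{23} = \pm 2$. I plan to pin the sign via an integrality argument: since $c_{ij} = (\lambda_{ij} + a_i a_j)/3 \in \Zz$ and each $d_i \equiv 2 \pmod 3$ forces $a_i^2 \equiv -d_i \equiv 1 \pmod 3$, we obtain $\lambda_{ij} \equiv -a_i a_j \pmod 3$. Multiplying,
\[
\lambda_{12}\lambda_{13}\lambda_{23} \;\equiv\; (-a_1 a_2)(-a_1 a_3)(-a_2 a_3) \;=\; -(a_1 a_2 a_3)^2 \;\equiv\; -1 \pmod 3,
\]
and as $\lambda_{12}\lambda_{13}\lambda_{23} \in \{\pm 1\}$ this forces it to equal $-1$, yielding $9D = d_1 d_2 d_3 - d_1 - d_2 - d_3 - 2$. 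The main obstacle I anticipate is justifying that $h^2, S_1, S_2, S_3$ genuinely form a basis of $A(Y)$ rather than of a proper finite-index sublattice; this should follow from the primitivity of each $K_{d_i}$ together with $\rk A(Y) = 4$, but a careful saturation argument may be needed, or else one computes the discriminant of $\langle h^2, S_1, S_2, S_3\rangle$ and absorbs the index squared into the final formula.
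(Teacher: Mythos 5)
Your linear-algebra core is correct: the row reduction does give
\[
9D \;=\; d_1d_2d_3 - d_1\lambda_{23}^2 - d_2\lambda_{13}^2 - d_3\lambda_{12}^2 + 2\lambda_{12}\lambda_{13}\lambda_{23},
\]
and, granting $\lambda_{ij}\in\{0,\pm1\}$ distributed as you claim, your mod $3$ argument pinning $\lambda_{12}\lambda_{13}\lambda_{23}=-1$ in case (3) is valid and is a genuinely nicer mechanism than what the paper does. The paper's proof does not argue at this level of generality at all: it simply writes down, case by case, explicit Gram matrices (with $\langle h^2\cdot S_i\rangle\in\{0,1\}$ according to $d_i\bmod 6$ and all $\langle S_i\cdot S_j\rangle=0$, so that $\lambda_{ij}=-1$ or $0$ automatically) and computes the four determinants directly.

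The gap is precisely the step where you claim the $\lambda_{ij}$ are \emph{forced}. Proposition \ref{Prop: intersection two hassett in rk 3} concerns cubic fourfolds with $\rk A(Y)=3$, i.e.\ it applies only when $\langle h^2,S_i,S_j\rangle$ is all of $A(Y)$; it says nothing about a rank-3 sublattice sitting inside a rank-4 $A(Y)$. Moreover, the constraint $\lambda\in\{0,\pm1\}$ there is imported from Proposition \ref{theorem: inter of Hassett div}, which is an existence statement of Yang--Yu (\emph{some} $Y\in\CC_{d_1}\cap\CC_{d_2}$ has that discriminant), not a computation valid for every $Y$ in the intersection. Concretely, a cubic containing two disjoint planes $P_1,P_2$ lies in $\CC_8\cap\CC_{14}$ via $K_8=\langle h^2,P_1\rangle$ and $K_{14}=\langle h^2,\,2h^2-P_1-P_2\rangle$, and for this pair one finds $\lambda=3\langle P_1\cdot(2h^2-P_1-P_2)\rangle-1\cdot 4=-7$ and discriminant $21$, not $(8\cdot 14-1)/3$; so the pairwise values of $\lambda_{ij}$ are simply not determined by the hypotheses, and cases (1)--(3) cannot be derived the way you propose. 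The paper sidesteps this entirely by taking the explicit Gram matrices (those realized by the Yang--Yu-type construction of a cubic in the triple intersection) as the starting point, so its proposition is in effect a determinant computation for those model lattices; your closing worry about $h^2,S_1,S_2,S_3$ spanning only a finite-index sublattice is moot in that reading, but it is a further real obstruction to the universal statement you are trying to prove. To repair your write-up, either fix the Gram matrices of the constructed $A(Y)$ as the paper does, or reformulate the statement as an existence result before applying your master formula.
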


\begin{proof} Set a basis of $A(Y)=\langle h^2,S,T,U\rangle$ and denote by $A_Y$ its intersection matrix.
    Our proof is similar to the case of rank three in \cite[Theorem 7]{YanYu20}. By definition, an integer $d$ satisfies condition \eqref{cond be non empty} and so $d>6$ and $d\equiv 0, 2 \pmod 6$. The proof is also divided into four cases:

    \textbf{Case 1:} Suppose that $d_1,d_2,d_3 \equiv 0 \pmod 6$. Set by $d_i=6n_i$, $i=1,2,3$. Then, the Gram matrix $A_Y$ with respect to the basis $A(Y)$ is 
    $$A = \begin{pmatrix}
    3 & 0 & 0 & 0 \\
   0 & 2n_1 & 0 & 0 \\
    0 & 0 & 2n_2 & 0 \\
    0 & 0 & 0 & 2n_3
    \end{pmatrix}.
    $$ Then, $|A_Y|=3(2n_1)(2n_2)(2n_3)=\frac{d_1d_2d_3}{9}$.

    \textbf{Case 2:} Suppose that $d_1,d_2 \equiv 0 \pmod 6$ and $d_3 \equiv 2 \pmod 6$. Set by $d_i=6n_i$, for $i=1,2$ and $d_3=6n_3 +2$. Then, the Gram matrix $A_Y$ with respect to the basis $A(Y)$ is 
    $$A = \begin{pmatrix}
    3 & 0 & 0 & 1 \\
   0 & 2n_1 & 0 & 0 \\
    0 & 0 & 2n_2 & 0 \\
    1 & 0 & 0 & 2n_3+1
    \end{pmatrix}.
    $$ Hence, $|A_Y|=8n_1n_2+24n_1n_2n_3=\frac{d_1d_2d_3}{9}$.

    \textbf{Case 3:} Suppose that $d_1 \equiv 0 \pmod 6$ and $,d_2, d_3 \equiv 2 \pmod 6$. Set by $d_1=6n_1$ and $d_i=6n_i+2$ for $i=2,3$. Then, the Gram matrix $A_Y$ with respect to the basis $A(Y)$ is 
    $$A = \begin{pmatrix}
    3 & 0 & 1 & 1 \\
   0 & 2n_1 & 0 & 0 \\
    1 & 0 & 2n_2+1 & 0 \\
    1 & 0 & 0 & 2n_3+1
    \end{pmatrix}.
    $$ Then, $|A_Y|=2n_1+8(n_1n_2+n_1n_3)+24n_1n_2n_3=\frac{d_1d_2d_3-d_1}{9}$.

    \textbf{Case 4:} Suppose that $d_i \equiv 2 \pmod 6$ for all $i=1,2,3$. Set by $d_i=6n_i+2$ for $i=1,2,3$. Then, the Gram matrix $A_Y$ with respect to the basis $A(Y)$ is 
    $$A = \begin{pmatrix}
    3 & 1 & 1 & 1 \\
   1 & 2n_1+1 & 0 & 0 \\
    1 & 0 & 2n_2+1 & 0 \\
    1 & 0 & 0 & 2n_3+1
    \end{pmatrix}.
    $$ Then, $|A_Y|=2(n_1+n_2+n_3)+8(n_1n_2+n_1n_3+n_2n_3)+24n_1n_2n_3$, and replacing $n_i=(d_i-2)/6$, we obtain 
    \begin{eqnarray*}
        D&=&\frac{1}{9}[ 3(d_1 + d_2 + d_3 - 6) + \\
        & &2 ( (d_1 - 2)(d_2 - 2) + (d_1 - 2)(d_3 - 2) + (d_2 - 2)(d_3- 2) ) + \\
        & &(d_1 - 2)(d_2 - 2)(d_3 - 2). ]
    \end{eqnarray*}
    Therefore, the simplified form matches the expression in \eqref{rk 4 c3}.
\end{proof}

\begin{remark}
A single value called the algebraicity index for a cubic fourfold $Y$ was introduced by Laza in \cite{Laz21} as $\kappa_Y = \frac{2^{\text{rk}(A(Y))}}{|A(Y)|}$. Roughly speaking, this invariant measures the irrationality of $Y$, and he conjectures that $Y$ is irrational when $\kappa_Y \leq 1$. It would be interesting to analyze the previous cases when $9D > 144$ and test for the existence of potentially irrational cubic fourfolds.
\end{remark}

Let $Y \subset \Pp^5$ be a cubic fourfold. Set by $A(Y)=\langle h^2, S, T, U \rangle$ with intersection matrix given as
\begin{table}[h!]
\begin{tabular}{c|cccl}
                         & $h^2$ & $S$                     & $T$                     & $U$ \\ \hline
$h^2$                    & $3$   & $a$                     & $b$                     & $e$ \\
$S$                      & $a$   & $s$                     & $c$                     & $f$ \\
$T$                      & $b$   & $c$                     & $t$                     & $g$ \\
\multicolumn{1}{l|}{$U$} & $e$   & \multicolumn{1}{l}{$f$} & \multicolumn{1}{l}{$g$} & $u$
\end{tabular}
\end{table}

Denote by $$d_1=3s-a^2,\ d_2=3t-b^2 \text{ and } d_3=3u-e^2.$$
\begin{proposition}\label{Prop: intersection three hassett in rk 4}
    Let $Y$ be a cubic fourfold in $\CC_{d_1}\cap\CC_{d_2}\cap\CC_{d_3}$ where $d_1,d_2,d_3$ satisfy condition \eqref{cond be non empty}. Assume that $\rk A(Y)$ is four. Then, $Y \in \CC_{d}$ where $$d:=d_1y^2+d_2z^2+d_3w^2+2(\lambda_{d_1d_2}yz+\lambda_{d_1d_3}yw+\lambda_{d_2d_3}zw),$$ and $y,z,w \in \Zz$ satisfies $\gcd (x,y,z)=1$. 
\end{proposition}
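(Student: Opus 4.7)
My plan is to adapt the proof of Proposition \ref{Prop: intersection two hassett in rk 3} directly to the rank-$4$ case by introducing one additional coordinate. First, I would fix the basis $\{h^2, S, T, U\}$ of $A(Y)$ with the Gram matrix displayed just before the statement, and consider a general integral combination
$$\alpha := xh^2 + yS + zT + wU \in A(Y).$$
A direct expansion using the matrix gives
\begin{align*}
\alpha^2 &= 3x^2 + sy^2 + tz^2 + uw^2 + 2axy + 2bxz + 2exw + 2cyz + 2fyw + 2gzw,\\
\langle h^2,\alpha\rangle &= 3x + ay + bz + ew.
\end{align*}

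Next, I would apply the identity $d = 3\alpha^2 - \langle h^2,\alpha\rangle^2$ for the discriminant of the rank-$2$ sublattice $K_d := \langle h^2, \alpha\rangle$. A short calculation shows the $x$-dependent terms cancel, yielding
\begin{align*}
d &= (3s-a^2)y^2 + (3t-b^2)z^2 + (3u-e^2)w^2\\
&\quad + 2(3c-ab)yz + 2(3f-ae)yw + 2(3g-be)zw.
\end{align*}
The diagonal coefficients are, by definition, $d_1, d_2, d_3$. The three cross coefficients $3c-ab$, $3f-ae$, $3g-be$ coincide with the quantities $\lambda_{d_id_j}$ arising from Proposition \ref{Prop: intersection two hassett in rk 3} applied to the rank-$3$ sublattices $\langle h^2, S, T\rangle$, $\langle h^2, S, U\rangle$, $\langle h^2, T, U\rangle$ respectively, each taking values $0$ or $\pm 1$ according to the mod-$6$ class of the pair $(d_i, d_j)$. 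Substituting produces exactly the displayed formula for $d$.

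Finally, I would address primitivity: under the hypothesis $\gcd(y, z, w) = 1$, the class $\alpha$ is not a nontrivial multiple of any element of $A(Y)$ modulo $\Zz\cdot h^2$, so $K_d = \langle h^2, \alpha\rangle$ is a saturated rank-$2$ sublattice defining a label, which witnesses $Y \in \CC_d$. The main obstacle I anticipate is the bookkeeping of the three $\lambda_{d_id_j}$ simultaneously: one must verify that the three identifications $3c-ab=\lambda_{d_1d_2}$, $3f-ae=\lambda_{d_1d_3}$, $3g-be=\lambda_{d_2d_3}$ are compatible with the four cases of Proposition \ref{prop: inter of 3 Hassett div} that classify the overall discriminant of $A(Y)$ by mod-$6$ classes. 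This is essentially three parallel copies of the rank-$3$ computation, so notationally dense but structurally routine.
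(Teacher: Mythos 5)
Your proposal follows essentially the same route as the paper's proof: expand $\alpha = xh^2 + yS + zT + wU$, compute the discriminant of the label $K_d=\langle h^2,\alpha\rangle$ (so the $x$-terms cancel), identify the cross coefficients $3c-ab$, $3f-ae$, $3g-be$ with the $\lambda_{d_id_j}$ of the rank-three case, and invoke $\gcd(y,z,w)=1$ for primitivity of $K_d$. This is exactly the argument given in the paper for Proposition \ref{Prop: intersection three hassett in rk 4}, so no substantive difference to report.
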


\begin{proof}
    Set by $A(Y)=\langle h^2, S,T,U \rangle$ a basis of the algebraic cohomology of $Y$. Define $$\alpha:=xh^2+yS+zT+wU \in A(Y).$$
    It is straightforward to show that 
    \begin{align}
    \left\{
    \begin{array}{ll}
    \alpha^2 & = 3x^2 +sy^2+tz^2+uw^2+2axy+2bxz+ 2exw+2cyz  +2fyw+2gzw, \\
    \langle h^2,\alpha \rangle & = 3x+ay+bz+ew.
    \end{array}
    \right.
    \end{align}
    
    In terms of the base $h^2, \alpha$, we obtain $Y \in \CC_d$ where \begin{eqnarray}\label{eq-rk 4: d in new base h alpha}
        d &=& (3s-a^2)y^2+(3t-b^2)z^2+(3u-e^2)w^2+ \nonumber \\
        & &2(3c-ab)yz+2(3f-ae)yw+2(3g-be)yz   \\
        &=& d_1 y^2+ d_2z^2+ d_3w^2 + 2(3c-ab)yz+2(3f-ae)yw+2(3g-be)yz.\nonumber
    \end{eqnarray}
    Assuming $\gcd(y,z,w)=1$ we obtain $K_d \subset A(Y)$ primitive. As in Proposition \ref{Prop: intersection two hassett in rk 3}, the non-empty intersection of two Hassett divisors reduces the computation of $3c-ab$, $3f-ae$, and $3g-be$ depending of numerical conditions of $d_1,d_2$ and $d_3$. Hence,
    $$\lambda_{d_1d_2}:=3c-ab=
    \begin{cases} 
    \pm 1 & \text{if } d_1, d_2 \equiv 2 \pmod 6 \\
     0 & \text{otherwise;} 
    \end{cases}
    $$ 

    $$\lambda_{d_1d_3}:=3f-ae=
    \begin{cases} 
    \pm 1 & \text{if } d_1, d_3 \equiv 2 \pmod 6 \\
     0 & \text{otherwise;} 
    \end{cases}
    $$
 
    $$\lambda_{d_2d_3}:=3g-be=
    \begin{cases} 
    \pm 1 & \text{if } d_2, d_3 \equiv 2 \pmod 6 \\
     0 & \text{otherwise.} 
    \end{cases}
    $$
    Replacing this in \eqref{eq-rk 4: d in new base h alpha}, we obtain the possibilities of $d$:

\begin{table}[h!]
\begin{tabular}{@{}ccccccc@{}}
$d_1$ & $d_2$ & $d_3$ & $\lambda_{d_1d_2}$ & $\lambda_{d_1d_3}$ & $\lambda_{d_2d_3}$ & $d$                                   \\ \midrule
0     & 0     & 0     & 0              & 0              & 0              & $d_1y^2+d_2z^2+d_3w^2$                \\
0     & 0     & 2     & 0              & 0              & 0              & $d_1y^2+d_2z^2+d_3w^2$                \\
0     & 2     & 2     & 0              & 0              & $\pm 1$        & $d_1y^2+d_2z^2+d_3w^2 \pm 2zw$        \\
2     & 2     & 2     & $\pm 1$        & $\pm 1$        & $\pm 1$        & $d_1y^2+d_2z^2+d_3w^2\pm 2(yz+yw+zw)$ \\ \bottomrule
\end{tabular}
\end{table}
\end{proof}

\begin{corollary}\label{cor: multiple scalar of Hassett divisors}
Assume that $\rk A(Y)$ is at least three. Let $Y \in \Cd$, $d \equiv 0,2 \pmod 6$. If $k$ is a perfect square, then $Y \in \CC_{kd}$.  
\end{corollary}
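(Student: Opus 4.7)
My plan is to exhibit a rank-$2$ primitive sublattice of $A(Y)$ of discriminant $kd$ containing $h^2$. Let $K_d = \langle h^2, \alpha \rangle$ be the primitive label witnessing $Y \in \mathcal{C}_d$, and write $k = \ell^2$ with $\ell \in \mathbb{Z}_{>0}$. The natural first move is the arithmetic observation that the sublattice $L := \langle h^2, \ell\alpha\rangle \subset A(Y)$ has Gram matrix
\[
  \begin{pmatrix} 3 & \ell\langle h^2,\alpha\rangle \\ \ell\langle h^2,\alpha\rangle & \ell^2\alpha^2 \end{pmatrix}
\]
with determinant $\ell^2\bigl(3\alpha^2 - \langle h^2,\alpha\rangle^2\bigr) = \ell^2 d = kd$; so the target discriminant is already realized by $L$, though $L$ fails to be primitive in $A(Y)$ when $\ell > 1$ (its saturation is $K_d$).

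To promote $L$ to a primitive sublattice of the same discriminant, I would use the hypothesis $\rk A(Y) \geq 3$: pick $\gamma \in A(Y)$ outside $K_d \otimes \mathbb{Q}$ and replace the generator $\ell\alpha$ by $\beta = \ell\alpha + m\gamma + c\,h^2$ for integers $m, c$ to be determined. Primitivity of $K' := \langle h^2, \beta\rangle$ in $A(Y)$ reduces to primitivity of $\bar\beta = \ell\bar\alpha + m\bar\gamma$ in the rank-$\geq 2$ quotient $A(Y)/\mathbb{Z}h^2$, arranged by a gcd/CRT argument on the coefficients of $\beta$. Expanding,
\[
  3\beta^2 - \langle h^2,\beta\rangle^2 = \ell^2 d + 2\ell m\mu + m^2 d_\gamma,
\]
with $\mu = 3\langle\alpha,\gamma\rangle - \langle h^2,\alpha\rangle\langle h^2,\gamma\rangle$ and $d_\gamma = 3\gamma^2 - \langle h^2,\gamma\rangle^2$; the equality of this discriminant with $kd$ then reduces to the linear relation $2\ell\mu + m\, d_\gamma = 0$ in the single unknown $m$.

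The main obstacle is making this relation solvable compatibly with primitivity, i.e.\ arranging $d_\gamma \mid 2\ell\mu$ together with $\gcd\bigl(\ell,\, -2\ell\mu/d_\gamma\bigr) = 1$. This is precisely where the rank-$\geq 3$ hypothesis does its work, providing rank-$\geq 2$ freedom in $A(Y)/\mathbb{Z}h^2$ to select $\gamma$ realizing the required divisibility. The most systematic route is to invoke Propositions \ref{Prop: intersection two hassett in rk 3} and \ref{Prop: intersection three hassett in rk 4}, which describe the achievable label discriminants on $Y$ as values of explicit binary (respectively ternary) quadratic forms $d_1 y^2 + 2\lambda yz + d_2 z^2$ on pairs (resp.\ triples) of coprime coordinates, and then to verify that $\ell^2 d$ is primitively represented by one of these forms for an appropriate choice of secondary Hassett divisor(s) in which $Y$ lies, using the rank hypothesis to secure the existence of such secondary divisors.
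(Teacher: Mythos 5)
Your reduction is set up correctly: $\langle h^2,\ell\alpha\rangle$ has discriminant $\ell^2 d=kd$ but is imprimitive, and a label is by definition a \emph{primitive} rank-two sublattice, so the entire content of the statement is the primitivity repair --- and that is precisely the step your proposal does not carry out. The relation $2\ell\mu+m\,d_\gamma=0$ forces $m=-2\ell\mu/d_\gamma$, which already requires $d_\gamma\mid 2\ell\mu$; nothing in the hypothesis $\rk A(Y)\geq 3$ produces a class $\gamma$ with this divisibility, and even when such $\gamma$ exists, $m$ inherits the factor $\ell$ whenever $\gcd(\ell,d_\gamma)=1$, so that $\bar\beta=\ell\bar\alpha+m\bar\gamma$ is again imprimitive and your condition $\gcd(\ell,-2\ell\mu/d_\gamma)=1$ fails. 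Your fallback --- ``verify that $\ell^2 d$ is primitively represented by the forms of Propositions \ref{Prop: intersection two hassett in rk 3} and \ref{Prop: intersection three hassett in rk 4}'' --- is not a verification but a restatement of the claim: for those forms (with $\lambda_{d_1d_2}\in\{0,\pm1\}$), say $\lambda=0$ and $d_2$ much larger than $kd_1$, the value $kd_1=\ell^2 d_1$ is represented only by $(y,z)=(\pm\ell,0)$, which violates the coprimality $\gcd(y,z)=1$ that those propositions impose exactly to guarantee primitivity of the label. So the decisive arithmetic step is missing, and it cannot be supplied from the stated hypotheses alone: knowing only that $Y\in\Cd$ and $\rk A(Y)\geq 3$ does not force the quadratic form recording the possible labellings to represent $kd$ primitively.

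For comparison, the paper gives no argument here at all: the corollary is recorded as an immediate consequence of Proposition \ref{Prop: intersection three hassett in rk 4}, i.e.\ by scaling the coefficients of the representing class, with the coprimality requirement $\gcd(y,z,w)=1$ silently set aside. You have therefore isolated the genuine subtlety (the same coprimality issue the paper glosses over), which is to your credit, but neither your divisibility scheme for $\gamma$ nor the appeal to the quadratic-form propositions closes it; as it stands the argument would need either an extra hypothesis on $A(Y)$ (e.g.\ control of the full Gram matrix, not just of one label) or a genuinely different construction of a primitive sublattice of discriminant $kd$.
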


\begin{theorem}
    If $Y \in \CC_{d_1}\cap \CC_{d_2}\cap \CC_{d_3}$ is a special cubic fourfold with $\rk A(Y) \geq 4$, then $Y \in C_d$ for some $d$ satisfying condition \eqref{cond be bir to moduli twisted}. In particular, the variety of lines $F(Y)$ is isomorphic to some moduli space of twisted stable sheaves on some K3 surface.
\end{theorem}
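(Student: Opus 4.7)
The plan is to combine Proposition~\ref{Prop: intersection three hassett in rk 4} with two classical number-theoretic tools---Chevalley--Warning and Hasse--Minkowski---to exhibit a primitive triple $(y,z,w) \in \Zz^3$ whose image under the ternary form
\begin{equation*}
    Q(y,z,w) := d_1 y^2 + d_2 z^2 + d_3 w^2 + 2(\lambda_{d_1 d_2} yz + \lambda_{d_1 d_3} yw + \lambda_{d_2 d_3} zw)
\end{equation*}
satisfies \eqref{cond be bir to moduli twisted}. Once such a $d = Q(y,z,w)$ is produced, Theorem~\ref{Theorem: moduli of cubic fourfolds} yields birationality of $F(Y)$ to a moduli space of twisted stable sheaves on a K3 surface, and the isomorphism-upgrade argument from the proof of Theorem~\ref{main: Theorem A} promotes birationality to isomorphism. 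The case $\rk A(Y) > 4$ reduces to the rank-4 case by restricting attention to a rank-4 sublattice containing $\langle h^2, S, T, U\rangle$.

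First observe that $Q$ is positive definite: since $Q(y,z,w) = 3\alpha^2 - \langle h^2, \alpha\rangle^2$ is the discriminant of the rank-2 sublattice $\langle h^2, \alpha\rangle$ for $\alpha = yS+zT+wU$, Cauchy--Schwarz in the positive definite lattice $A(Y)$ gives $Q \geq 0$ with equality only when $(y,z,w) = (0,0,0)$. Applying Chevalley--Warning to the homogeneous polynomial $Q$ of degree $2$ in $3 > 2$ variables, one obtains a nontrivial $\mathbb{F}_p$-zero of $Q$ for every prime $p$. Hence $Q \otimes \mathbb{F}_p$ is isotropic, and for $p \nmid \det Q$ it is universal---every residue class in $\mathbb{F}_p$ is represented by $Q$---so we have complete control over the residue class $Q(y,z,w) \bmod p$ at all but finitely many primes.

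To finish, pick a prime $\ell \equiv 1 \pmod 3$ and target the value $d_0 = 2\ell$. This $d_0$ trivially satisfies \eqref{cond be bir to moduli twisted}, since $d_0/2 = \ell$ has no prime factor $\equiv 2 \pmod 3$ and $v_2(d_0) = 1$ is odd. By Hasse--Minkowski applied to the indefinite quaternary form $Q(y,z,w) - d_0 u^2$, rational representability of $d_0$ follows from local solvability at every place: the Archimedean place is immediate, primes $p \nmid 2\det Q$ are controlled by the universality established above, and the finitely many bad primes $p \mid 2\det Q$ are handled by restricting $\ell$ to a suitable arithmetic progression modulo some integer $M$. Dirichlet's theorem on primes in arithmetic progressions then supplies such an $\ell$; clearing denominators and extracting the primitive part of a rational solution yields a primitive triple with $Q(y,z,w) = d_0 N^2$ for some integer $N$, and this value still satisfies \eqref{cond be bir to moduli twisted} since the condition is preserved under multiplication by perfect squares. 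The main obstacle is verifying that the local solvability constraints at the bad primes $p \mid 2\det Q$ are jointly compatible with $\ell \equiv 1 \pmod 3$---this is where the classical facts concerning solvability of polynomial equations over finite fields are essential, via enumeration of the Hilbert symbols at the bad primes and verification that the resulting congruence conditions on $\ell$ share a common solution modulo $3M$.
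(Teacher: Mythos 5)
Your route is genuinely different from the paper's. The paper stays elementary: starting from Proposition \ref{Prop: intersection three hassett in rk 4} it writes $d=d'\cdot q_2(y,z,w)$ with $d'=\gcd(d_1,d_2,d_3)$, applies Chevalley's theorem to the ternary form $q_2$ modulo the product of the ``bad'' primes $p\equiv 2 \pmod 3$ occurring to odd order in $d'$, and then evens out all bad exponents using the square-scaling Corollary \ref{cor: multiple scalar of Hassett divisors}; no local--global theorem is needed. You instead fix a target value $d_0=2\ell$ with $\ell\equiv 1\pmod 3$ prime and try to represent it, up to squares, by the form $Q$ via Hasse--Minkowski and Dirichlet. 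Several of your preliminary points are correct and even sharpen what the paper records: the identity $Q(y,z,w)=3\alpha^2-\langle h^2,\alpha\rangle^2$ with $\alpha=yS+zT+wU$ shows $Q$ is positive definite by Cauchy--Schwarz (settling the archimedean place), and passing to the primitive part of an integral solution only changes the represented value by a perfect square, which is harmless for \eqref{cond be bir to moduli twisted}; the final discriminant $2\ell N^2$ also satisfies \eqref{cond be non empty} since $\ell\equiv 1\pmod 3$, $\ell>3$ forces $2\ell N^2\equiv 0,2\pmod 6$.

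However, as written the proposal has a genuine gap exactly where you flag ``the main obstacle'': you never verify that the local solvability conditions at the primes $p\mid 2\det Q$ are simultaneously compatible with $\ell\equiv 1\pmod 3$, and this compatibility is the whole content of the argument --- a priori a single unlucky bad prime could obstruct every admissible $\ell$. The gap is real but closable by the standard representation theorem for ternary forms over $\Qq_p$ (see \cite[Ch.~IV]{Ser73}): a nondegenerate rank-three form over $\Qq_p$ fails to represent at most one class of $\Qq_p^\times/(\Qq_p^\times)^2$. Hence each bad prime excludes at most one square class for the value $2\ell$; since $v_p(2\ell)=0$ for odd $p\neq \ell$ and $v_2(2\ell)=1$, each exclusion amounts to avoiding at most one residue class of $\ell$ modulo $p$ (respectively modulo $8$), and these finitely many conditions together with $\ell\equiv 1\pmod 3$ are simultaneously satisfiable by the Chinese remainder theorem, after which Dirichlet supplies $\ell$ (taken large enough that $\ell\nmid 2\det Q$). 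With that lemma inserted your proof is complete, but it is noticeably heavier than the paper's: it invokes Hasse--Minkowski, the local representation theorem and Dirichlet where the paper uses only Chevalley's theorem, while in exchange it produces a labelling of the concrete shape $d=2\ell N^2$ rather than merely one satisfying \eqref{cond be bir to moduli twisted}.
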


\begin{proof}
The fact that $F(Y)$ is isomorphic to some moduli space of twisted stable sheaves on some K3 surface follows from Theorem \ref{main: Theorem A} but we can also provide an alternative proof coming from the numerical characterization in Theorem \ref{Theorem: moduli of cubic fourfolds}. To do this, we show the existence of some labelling $d$ of $Y$ satisfying always the condition \eqref{cond be bir to moduli twisted}. Without loss of generality, we assume that $\rk A(Y)=4$.

In general, the polynomial \( d \) is quadratic in exactly \( \rk(A(Y)) - 1 \) variables. According to a classical theorem by Chevalley \cite{Che35}, there always exists a non-trivial solution to a homogeneous polynomial in \( \mathbb{F}_p \) if the number of variables exceeds the degree of the polynomial.
To apply this in our case, set by $d' = \gcd(d_1, d_2, d_3)$ and consider its prime factorization of $$d'= p_1^{n_1}\ldots p_m^{n_m}=\prod_{n_i \text{ even}}p_i^{n_i}\prod_{n_i \text{ odd}}p_i^{n_i}$$ Denote by $I$ the set of all $p_i \equiv 2 \pmod{3}$ in the prime factorization of $d'$ with $n_i$ odd. Now, consider $d$ as in Proposition \ref{Prop: intersection three hassett in rk 4} and factorize each $d$ as $d = d'\cdot q_2(y, z, w)$, where $q_2$ is a quadratic form. By Chevalley's theorem, the polynomial $q_2(y, z, w)=0 \pmod {p_i}$ has a nontrivial solution since $\deg(q_2) = 2 < 3$ for any prime $p_i$. In fact, a non-trivial solution $(y_0,z_0,w_0)$ exists for all  $\prod\limits_{i \in I} p_i$:

$$q_2(y_0,z_0,w_0)=l\cdot \prod\limits_{i \in I}p_i \text{ for some }  l\in\Zz_{>0}.$$

If $l$ is an integer satisfying condition \eqref{cond be bir to moduli twisted}, then $d_0:=d'q_2(y_0,z_0,w_0)$ satisfies also condition \eqref{cond be bir to moduli twisted} by construction. Suppose that $l$ admits a prime $p \equiv 2 \pmod 3$ in its prime factorization with an odd exponent. Set by $d_0=d'\cdot q_2(\frac{y_0}{l},\frac{z_0}{l},\frac{w_0}{l})$. Then, 
\begin{eqnarray*}
    d_0 &=& \prod_{n_i \text{ even}}p_i^{n_i} \prod \limits_{i \in I}p_i^{n_i}\cdot \frac{1}{l^2} \cdot q_2(y_0,z_0,w_0)\\
    &=&\frac{1}{l}\prod_{n_i \text{ even}}p_i^{n_i}\prod \limits_{i \in I}p_i^{n_i+1}.
\end{eqnarray*}

In particular, $ Y \in \CC_{l d_0}$ if $ld_0\equiv 0,2 \pmod 6$. Note that $l d_0$ has all primes $p_i\equiv 2 \pmod 3$ with exponent even. In the case that \( ld_0 \not\equiv 0,2 \pmod{6} \), it follows from Corollary \ref{cor: multiple scalar of Hassett divisors} that \( Y \in \CC_{l^2 d_0} \). Additionally, the property that preserves all primes \( p_i \equiv 2 \pmod{3} \) with even exponents still holds for \( l^2d_0 \).
\end{proof}

\begin{remark}
The rationality of cubic fourfolds is a challenging problem that is linked to many frameworks, particularly with a significant connection to K3 surfaces, see for instance \cite[\S 3]{Has16}. Investigating the rationality of certain cubic fourfolds \(Y\), for which the variety of lines is not birational to a moduli space on a K3 surface, would be particularly insightful. Is a cubic fourfold expected to be rational if its variety of lines is a moduli space of sheaves on a K3 surface?
\end{remark}

\bibliographystyle{alpha} 
\bibliography{bibliography}
\end{document}